\documentclass[reqno]{amsart}

\usepackage{enumerate}
\usepackage{amssymb}

\usepackage[colorlinks=true]{hyperref}
\hypersetup{urlcolor=blue, citecolor=cyan}
\hypersetup{citecolor=blue}

\usepackage[latin9]{inputenc}

\numberwithin{equation}{section}

\newtheorem{thm}{Theorem}[section]
\newtheorem{lem}[thm]{Lemma}

\newtheorem{cor}[thm]{Corollary}

\theoremstyle{definition}
\newtheorem{defn}[thm]{Definition}
\newtheorem{rem}[thm]{Remark}

\newcommand\R{{\mathbb R}}
\newcommand\C{{\mathbb C}}
\newcommand\N{{\mathbb N}}

\newcommand\Cz{{C_0(\R^N )}}
\newcommand\Czr{{C_0(\R )}}

\newcommand\Comp{{\mathrm{c}}}
\newcommand\Tma{T_{\mathrm{max}}}
\newcommand\Czu{{C_0(\R )}}
\newcommand\Ec[1]{e^{#1\Delta }}
\newcommand\Eqdef{\stackrel{\text{\rm\tiny def}}{=}}
\newcommand\NH{{\mathcal I}}
\newcommand\Fmu{{\mathcal F}^{-1}}

\newcommand\goto{\mathop{\longrightarrow}}

\newcommand\MScN[1]{\href{http://www.ams.org/mathscinet-getitem?mr=#1}{\nolinkurl{(#1)}}}
\newcommand\DOI[1]{\href{http://dx.doi.org/#1}{(doi: \nolinkurl{#1})}}
\newcommand\LINK[1]{\href{#1}{(link: \nolinkurl{#1})}}

\newcommand\DI{u_0 }
\newcommand\DIb{w_0 }

\begin{document}

\title{Non-regularity in H\"older and Sobolev spaces of solutions to the semilinear heat and Schr\"o\-din\-ger  equations}

\def\shorttitle{Non-regularity in H\"older and Sobolev spaces}

\author[T. Cazenave]{Thierry Cazenave$^1$}
\address{$^1$Universit\'e Pierre et Marie Curie \& CNRS, Laboratoire Jacques-Louis Lions,
B.C. 187, 4 place Jussieu, 75252 Paris Cedex 05, France}
\email{\href{mailto:thierry.cazenave@upmc.fr}{thierry.cazenave@upmc.fr}}

\author[F. Dickstein]{Fl\'avio Dickstein$^2$}
\address{$^2$Instituto de Matem\'atica, Universidade Federal do Rio de Janeiro, Caixa Postal 68530, 21944--970 Rio de Janeiro, R.J., Brazil}
\email{\href{mailto:flavio@labma.ufrj.br}{flavio@labma.ufrj.br}}

\author[F. B.~Weissler]{Fred B.~Weissler$^3$}
\address{$^3$Universit\'e Paris 13,  Sorbonne Paris  Cit\'e, CNRS UMR 7539 LAGA, 99 Avenue J.-B. Cl\'e\-ment, F-93430 Villetaneuse, France}
\email{\href{mailto:weissler@math.univ-paris13.fr}{weissler@math.univ-paris13.fr}}

\subjclass[2010] {Primary 35B65; secondary 35A01, 35B33, 35K91, 35Q55, 35Q56}

\keywords{Non-regularity of solutions, ill-posedness, semilinear heat and Schr\"o\-din\-ger equations}

\thanks{Research supported by the ``Brazilian-French Network in Mathematics"}
\thanks{Fl\'avio Dickstein  was partially supported by CNPq (Brasil), and by the Fondation Sciences Math\'ematiques de Paris.}

\begin{abstract}

In this paper we study the Cauchy problem for the semilinear heat and Schr\"o\-din\-ger equations, with the nonlinear term $ f ( u ) = \lambda   |u|^\alpha u$. 
We show that low regularity of $f$ (i.e., $\alpha >0$ but small) limits the regularity of any possible solution for a certain class of smooth initial data. 
We employ two different methods, which yield two different types of results. On the one hand, we consider the semilinear equation as a perturbation of the ODE $w_t= f(w)$. This yields in particular an optimal regularity result for the semilinear heat equation in H\"older spaces. In addition, this approach yields ill-posedness results for NLS in certain $H^s$ spaces, which depend on the smallness of $\alpha $ rather than the scaling properties of the equation. 
Our second method is to consider the semilinear equation as a perturbation of the linear equation via Duhamel's formula. This yields in particular that if $\alpha $ is sufficiently small and $N$ sufficiently large, then the nonlinear heat equation is ill-posed in $H^s (\R^N ) $ for all $s\ge 0$. 

\end{abstract}

\maketitle

%\begin{center} 
%\today
%\end{center} 

%\bigskip 
%\tableofcontents

\section{Introduction}

This paper is concerned with regularity of solutions of two well known and well studied semilinear evolution equations, the semilinear  heat equation
\begin{equation} \label{NLH}
\begin{cases} 
u_t= \Delta u+ \lambda  |u|^\alpha u \\
u(0, \cdot ) = \DI (\cdot ) 
\end{cases} 
\end{equation} 
and the semilinear Schr\"o\-din\-ger equation
\begin{equation} \label{NLSb}
\begin{cases} 
i u_t= \Delta u+ \lambda  |u|^\alpha u \\
u (0, \cdot ) = \DI (\cdot )
\end{cases} 
\end{equation} 
in $\R^N $, where $\alpha >0$ and $\lambda \in  \C$, $\lambda \not = 0$. 
More precisely, we allow the initial value $\DI $ to be infinitely smooth, and we study the loss of regularity due to the nonlinear term.
Therefore, we are particularly interested in small values of $\alpha >0$. Let $f(u)=  |u|^\alpha u$ with $0<\alpha <1$. As a point function $f$ is $C^1$ but not $C^2$. Formally, this might be considered an obstacle to the regularity of solutions of~\eqref{NLH} and~\eqref{NLSb}. 
Indeed, in order to prove the regularity of the solutions of~\eqref{NLH} and~\eqref{NLSb} (for instance by a fixed-point argument), one uses the regularity of the nonlinear term.
However, the relationship between the regularity of $f $ and the regularity of the solution is not a simple one.
Indeed, suppose $u $ is $C^2 (\R^N , \C )$,  $u(  x_0 )= 0$ and $\nabla u ( x_0)\not =  0$ for some $x_0\in \R^N $, then $ f(u) \not \in C^2 (\R^N , \C )$. 
On the other hand, for any reasonable initial value, for example in $\Cz$, the corresponding solution of~\eqref{NLH} will in fact be $C^2$ in space for $t>0$ by standard parabolic regularity. 
Thus the non-regularity of $f(u)$ does not immediately imply the non-regularity of $u$. 

The question of regularity is strongly related to the question of well-posedness. Recall that an evolution equation, such as~\eqref{NLH}  or~\eqref{NLSb}, is locally well-posed in a Banach space $X$ if for every $\DI \in X$ there exist $T>0$ and a solution $u\in C( [0,T], X)$ such that $u (0) =0$. In addition, the solution is required to be unique in some sense, not always in $C( [0,T], X)$, and is also required to depend continuously, again in some appropriate sense, on the initial value $\DI$. 
The key point for our purposes is that if $X$ is a positive order Sobolev space, whose elements have a certain degree of regularity, the resulting solution maintains this regularity. 

Specifically, if we wish to use a standard perturbation argument to prove that the Cauchy problem for either  equations~\eqref{NLH} or~\eqref{NLSb} is locally well-posed in $H^s (\R^N ) $ for some given $s>0$ we are confronted with two different requirements on $\alpha $. On the one hand, we need that the nonlinear term be controlled by the linear flow. This translates (formally) as the condition
\begin{equation} \label{fC1} 
\begin{cases} 
\displaystyle 0< \alpha \le  \frac {4} {N-2s}, &   s<  {N} / {2} \\
0<\alpha <\infty , &   s\ge   {N} / {2}.
\end{cases} 
\end{equation} 
On the other hand, in order to carry out the perturbation argument in $H^s  $, 
the nonlinear term must be sufficiently smooth. 
When $\alpha $ is not an even integer, then $\alpha $ must be  large enough so that $ f ( u ) =  |u|^\alpha u$ be sufficient regular.
In the case of the simplest perturbation argument requiring an estimate of $  |u|^\alpha u $ in $H^s (\R^N ) $, this leads to the condition
\begin{equation} \label{fC2} 
[s] <\alpha .
\end{equation} 
See e.g.~\cite{Ribaud, MolinetRY} for the heat equation, \cite{Kato2, Kato3, CWHs, Kato1, Pecher, FangH} for the Schr\"o\-din\-ger equation.

Since the first condition~\eqref{fC1} is related to scaling properties of the equation
(see Section~3.1 in~\cite{Tao}, and in particular the discussion p.~118), 
 it can be considered as natural. In fact, in some cases it is known that if this condition is not satisfied, then the problem is not well-posed in $H^s (\R^N ) $.  See for instance~\cite{MolinetRY, ChristCT, BurqGT, Carles, AlazardC}.
On the other hand, \eqref{fC2} might appear as a purely technical condition which one should be able to remove by a more appropriate argument. 
Indeed, one can sometimes improve condition~\eqref{fC2}  by using the fact that one time derivative is like two space derivatives, but we are still left with the condition 
\begin{equation} \label{fC3} 
[s] <2\alpha .
\end{equation} 
See~\cite{Kato3, Pecher, FangH}.

The purpose of this paper, as opposed to the above cited papers, is to show that in certain cases  ``technical" restrictions such as~\eqref{fC2} and~\eqref{fC3} are not purely technical, but impose genuine limitations on the regularity of the solution.
More precisely, we show that condition~\eqref{fC2} is not always sufficient to imply local well-posedness of~\eqref{NLH}  and~\eqref{NLSb} in $H^s$. (See Remarks~\ref{eRR1b} and~\ref{eRR1c}, and Theorem~\ref{eIP1}.) In fact, we prove under various circumstances that there exist initial values $\DI \in C^\infty _\Comp (\R^N )$ for which~\eqref{NLH}  or~\eqref{NLSb} cannot have a local solution with a certain degree of regularity. To our knowledge, there are no previous results of this type.

Our first result concerns the nonlinear heat equation~\eqref{NLH},  and is in fact optimal.
We recall that the Cauchy problem~\eqref{NLH} is locally well-posed in $\Czr$, i.e. for any $\DI \in \Czr$, there exist a maximal existence time $\Tma >0$ and a unique  solution  $u\in C([0, \Tma) , \Czr) $ of~\eqref{NLH}.
Let $0<\alpha <1$, let  $\DI$ be smooth, and let $u\in C([0, \Tma) , \Cz)$ be the resulting, maximal solution of~\eqref{NLH}. It is known that, given any $0< T< \Tma$, 
$\partial _t u$, $\nabla \partial _t u$, and all space-derivatives of $u$  of order up to  $ 3$ belong to $C([0,T] \times \R^N )$. Furthermore, the spatial derivatives of order $3$ are $\alpha $-H\"older continuous, i.e.
\begin{equation} \label{fP1} 
\sup 
 _{\substack{ 0\le t\le T\\    |m |=3}}
 |   \partial _x^m u  (t ) |_{ \alpha }   < \infty 
\end{equation} 
where $  | w |_\ell $ is defined by
\begin{equation} \label{fHO1} 
 | w |_\ell \Eqdef \sup  _{ x\not = y } \frac { | u (x) - u (y) |} { |x - y|^\ell }
\end{equation} 
for $\ell >0$ and $w\in C (\R^N )$. See Theorem~\ref{Holder} in the Appendix for a precise statement. The theorem below shows that the $\alpha $-H\"older continuity cannot in general be improved in the sense that one cannot replace $\alpha $ by $\beta >\alpha $.

\begin{thm} \label{eT3} 
Let $0<\alpha <1$ and $\lambda \in \C  \setminus \{0\}$. There exists an initial value $\DI \in C^\infty _\Comp (\R^N )$ 
such that the corresponding maximal solution $u\in C([0, \Tma) , \Cz)$ of~\eqref{NLH} is
three times continuously differentiable with respect to the space variable and
\begin{equation} \label{feT3} 
\int _s ^t  | \partial _y \Delta  u(\sigma )  |_\beta  =+\infty 
\end{equation} 
for all $\alpha <\beta \le 1$ and all $0\le s<t < \Tma$ with $t$ sufficiently small.
\end{thm}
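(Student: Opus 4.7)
The plan is to exhibit, for each small $\sigma>0$, a point $x_0(\sigma)$ at which $u(\sigma,\cdot)$ vanishes transversely, and to use the equation itself to transfer the $\alpha$-H\"older-but-no-better singularity of $|u|^\alpha\partial_y u$ onto $\partial_y\Delta u$. I will work with $\lambda\in\R$ and real-valued $u_0$; the case $\lambda\in\C\setminus\R$ is handled analogously on a $2$-plane normal to the codimension-$2$ vanishing set of complex-valued $u$.

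Choose $u_0\in C^\infty_\Comp(\R^N,\R)$ with $u_0(0)=0$ and $\partial_y u_0(0)\neq 0$. By Theorem~\ref{Holder}, $u$ is $C^3$ in $x$ with $\alpha$-H\"older third derivatives, and $u_t,\nabla u_t\in C([0,T]\times\R^N)$ for every $T<\Tma$. Continuity of $\partial_y u$ and the implicit function theorem in the $y$-direction yield a $C^1$ curve $\sigma\mapsto x_0(\sigma)$, defined on some $[0,\tau]$, such that $u(\sigma,x_0(\sigma))=0$ and $a(\sigma)\Eqdef\partial_y u(\sigma,x_0(\sigma))\neq 0$. Differentiating~\eqref{NLH} in $y$ gives the pointwise identity
$$\partial_y\Delta u(\sigma,x)=\partial_y u_t(\sigma,x)-\lambda(\alpha+1)|u(\sigma,x)|^\alpha\partial_y u(\sigma,x),$$
and a first-order Taylor expansion of $u(\sigma,\cdot)$ around $x_0(\sigma)$ gives
$$|u(\sigma,x_0(\sigma)+h e_y)|^\alpha\partial_y u(\sigma,x_0(\sigma)+h e_y)=\mathrm{sgn}(a(\sigma))|a(\sigma)|^{\alpha+1}|h|^\alpha+O(|h|^{\alpha+1}),$$
so the nonlinear term contributes exactly $|h|^\alpha$ to the difference $\partial_y\Delta u(\sigma,x_0(\sigma)+h e_y)-\partial_y\Delta u(\sigma,x_0(\sigma))$.

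The main technical step is to show that $\partial_y u_t(\sigma,\cdot)$ is strictly smoother than $\alpha$-H\"older, so that it cannot cancel the $|h|^\alpha$ contribution above. Differentiating~\eqref{NLH} in $t$, the function $v\Eqdef u_t$ satisfies the linear heat equation $v_t-\Delta v=cv$ with $c=\lambda(\alpha+1)|u|^\alpha\in L^\infty$ and $v(0,\cdot)=\Delta u_0+\lambda|u_0|^\alpha u_0\in W^{1,\infty}(\R^N)$. Applying the heat-kernel smoothing bounds
$$\|e^{\tau\Delta}g\|_{C^{0,\beta}}\le C\tau^{-\beta/2}\|g\|_\infty\quad\text{and}\quad\|\partial_y e^{\tau\Delta}g\|_{C^{0,\beta}}\le C\tau^{-(1+\beta)/2}\|g\|_\infty$$
respectively to the two terms of the Duhamel formula $\partial_y v(\sigma)=e^{\sigma\Delta}\partial_y v(0)+\int_0^\sigma\partial_y e^{(\sigma-s)\Delta}[cv](s)\,ds$ yields $|\partial_y v(\sigma)|_\beta<+\infty$ for every $\sigma\in(0,\tau]$ and every $\beta\in(0,1)$.

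Fix $\beta^*\in(\alpha,1)$. Since $|\partial_y u_t(\sigma,x_0(\sigma)+h e_y)-\partial_y u_t(\sigma,x_0(\sigma))|\le|\partial_y v(\sigma)|_{\beta^*}\,|h|^{\beta^*}$ and $\beta^*>\alpha$, the pointwise identity above gives
$$\partial_y\Delta u(\sigma,x_0(\sigma)+h e_y)-\partial_y\Delta u(\sigma,x_0(\sigma))=-\lambda(\alpha+1)\mathrm{sgn}(a(\sigma))|a(\sigma)|^{\alpha+1}|h|^\alpha+O(|h|^{\min(\beta^*,\alpha+1)}),$$
whose absolute value is of exact order $|h|^\alpha$ as $h\to 0^+$. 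Consequently, for every $\beta\in(\alpha,1]$ and every $\sigma\in(0,\tau]$,
$$|\partial_y\Delta u(\sigma)|_\beta\ge\limsup_{h\to 0^+}\frac{\bigl|\partial_y\Delta u(\sigma,x_0(\sigma)+h e_y)-\partial_y\Delta u(\sigma,x_0(\sigma))\bigr|}{|h|^\beta}=+\infty,$$
and integrating in $\sigma$ over any $[s,t]\subset[0,\tau]$ gives~\eqref{feT3}. The principal difficulty is the regularity of $\partial_y u_t$: the source $cv$ in the Duhamel formula is only bounded (while $c$ itself is only $\alpha$-H\"older), so the gain of H\"older exponents strictly above $\alpha$ rests entirely on heat-semigroup smoothing, and the failure of the corresponding bound at $\beta=1$ is precisely what obliges us to compare against the auxiliary $\beta^*<1$ in the final Taylor estimate.
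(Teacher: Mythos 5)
Your route is genuinely different from the paper's. The paper argues by contradiction: assuming $\int_0^T |\partial_y\Delta u(t)|_\beta\,dt<\infty$ for some $\beta>\alpha$, it feeds $h=\Delta u$ restricted to the symmetry hyperplane $\{y=0\}$ into the ODE perturbation result (Theorem~\ref{eODE2}) and concludes that $u(t,\cdot)$ could not be twice differentiable at $y=0$, contradicting parabolic regularity (Theorem~\ref{Holder}). You instead work directly with the identity $\partial_y\Delta u=\partial_y u_t-\lambda\,\partial_y(|u|^\alpha u)$ and try to show that $\partial_y u_t(\sigma,\cdot)$ is $\beta^*$-H\"older for some $\beta^*>\alpha$ at each fixed $\sigma>0$, via the Duhamel formula for the linearized equation. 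The cancellation you exploit (the $|y|^{1+\alpha}$ singularities of $\Delta u$ and of $\lambda|u|^\alpha u$ compensate inside $u_t$) is consistent with a formal expansion at the zero set, and if your argument closes it yields $|\partial_y\Delta u(\sigma)|_\beta=+\infty$ for every small $\sigma>0$ --- a pointwise-in-time statement strictly stronger than \eqref{feT3}; compare Remark~\ref{eRR1}, where the authors note their method does not give this, and Theorem~\ref{eRN2}, where a pointwise-in-time conclusion is only reached at the level of five derivatives.

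There are, however, two genuine gaps. First, the theorem allows any $\lambda\in\C\setminus\{0\}$, in which case $u$ is complex-valued even for real $\DI$; the implicit function theorem in the single variable $y$ cannot produce your curve $x_0(\sigma)$ (two real equations, one real unknown), and a transverse zero of $u(\sigma,\cdot)$ need not persist for $\sigma>0$ at all. The sentence about ``a $2$-plane normal to the codimension-$2$ vanishing set'' is not a proof. The repair is the paper's symmetry device: take $\DI(x',-y)=-\DI(x',y)$ with $\partial_y\DI(0,0)\neq0$, so that $u(t,x',0)\equiv 0$ by uniqueness and $y\mapsto u(t,0,y)$ has a transverse zero at $y=0$; one must then redo your Taylor step with the complex formula $\partial_y(|u|^\alpha u)=\tfrac{\alpha+2}{2}|u|^\alpha\partial_yu+\tfrac{\alpha}{2}|u|^{\alpha-2}u^2\overline{\partial_yu}$, whose leading term at the zero is $(\alpha+1)|z|^{\alpha}z\,|y|^\alpha$ with $z=\partial_yu(t,0,0)\neq0$, so the coefficient remains nonzero. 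Second, the assertion ``differentiating \eqref{NLH} in $t$, $v=u_t$ satisfies $v_t-\Delta v=cv$'' is unjustified: nothing in the known regularity of $u$ provides $u_{tt}$ or $\Delta u_t$. What your smoothing argument actually requires is the mild identity $v(t)=e^{(t-t_0)\Delta}v(t_0)+\int_{t_0}^{t}e^{(t-s)\Delta}(cv)(s)\,ds$ for $0<t_0<t$ (with $c$ the bounded $\R$-linear differential of $w\mapsto\lambda|w|^\alpha w$ at $u$), and this must be derived by differentiating the Duhamel formula for $u$ in $t$, which in turn rests on proving that $t\mapsto |u(t)|^\alpha u(t)$ is $C^1$ from $[0,T]$ into $\Cz$. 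That fact is true (because the differential of $w\mapsto|w|^\alpha w$ is $\alpha$-H\"older and $u\in C^1([0,T],\Cz)$), but it is the crux of your proof and cannot be waved through. With these two repairs your argument appears to close and would in fact strengthen the stated theorem.
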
 

Theorem~\ref{eT3} has the immediate following corollary, by using Sobolev's embedding theorem (see~\eqref{fSob2}).

\begin{cor} \label{eT1} 
Let $0<\alpha <1$. There exists an initial value $\DI \in C^\infty _\Comp (\R^N )$ 
such that the corresponding maximal solution $u\in C([0, \Tma) , \Cz)$ of~\eqref{NLH}
does not belong to $L^1((0,T) , H^{s, p} (\R^N ) )$ if $s> 3+ \frac {N} {p} + \alpha $, $1<p<\infty $, and $0< T< \Tma$.
\end{cor}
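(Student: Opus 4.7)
The plan is to deduce the corollary from Theorem~\ref{eT3} via the Sobolev embedding. Let $\DI \in C^\infty_\Comp(\R^N)$ and the corresponding maximal solution $u \in C([0,\Tma), \Cz)$ be those supplied by Theorem~\ref{eT3}. I argue by contradiction, supposing that $u \in L^1((0,T), H^{s,p}(\R^N))$ for some Sobolev exponent $s > 3 + \frac{N}{p} + \alpha$, some $1 < p < \infty$, and some $0 < T < \Tma$.

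Since $s - \frac{N}{p} - 3 > \alpha$ and $\alpha < 1$, I may fix an exponent $\beta$ with
\begin{equation*}
\alpha < \beta < \min\{1,\; s - \tfrac{N}{p} - 3\}.
\end{equation*}
The Sobolev embedding then yields the continuous inclusion $H^{s,p}(\R^N) \hookrightarrow C^{3,\beta}(\R^N)$, so that for every $w \in H^{s,p}(\R^N)$ and every first-order spatial derivative $\partial_y$,
\begin{equation*}
|\partial_y \Delta w|_\beta \le C \, \|w\|_{H^{s,p}(\R^N)}.
\end{equation*}

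Applying this bound to $u(\sigma)$ for almost every $\sigma \in (0,T)$ and integrating gives
\begin{equation*}
\int_0^{T'} |\partial_y \Delta u(\sigma)|_\beta \, d\sigma \le C \int_0^{T'} \|u(\sigma)\|_{H^{s,p}(\R^N)} \, d\sigma < \infty
\end{equation*}
for every $0 < T' \le T$. However, Theorem~\ref{eT3} asserts via \eqref{feT3} that the left-hand side equals $+\infty$ once $T'$ is sufficiently small, giving the desired contradiction.

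The argument is immediate once Theorem~\ref{eT3} is in hand, so there is no substantive obstacle. The only care required is to choose $\beta$ both strictly above $\alpha$ (so that \eqref{feT3} applies) and strictly below $1$ (so that the Sobolev embedding into $C^{3,\beta}$ is valid); the strict inequality $s > 3 + \frac{N}{p} + \alpha$ makes this interval of admissible $\beta$ non-empty.
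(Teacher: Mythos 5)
Your proof is correct and follows exactly the route the paper intends: the paper derives Corollary~\ref{eT1} from Theorem~\ref{eT3} precisely via the Sobolev embedding~\eqref{fSob2} into $C^{3,\beta}(\overline{\R^N})$, which is what you do, with the right care in choosing $\beta\in(\alpha,\min\{1,\,s-\tfrac{N}{p}-3\})$ so that both \eqref{feT3} and the embedding apply.
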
 

\begin{rem} \label{eRR1b} 
The initial value $\DI \in C^\infty _\Comp (\R^N )$  can be chosen arbitrarily small (in any space).
See Remark~\ref{eRemSD}. 
Corollary~\ref{eT1} therefore implies that $s\le  3+ \frac {N} {p} + \alpha $, $1<p<\infty $ is a necessary condition for~\eqref{NLH}  to be locally well posed in $H^{s, p} (\R^N ) )$, even in an arbitrarily small ball.
\end{rem} 

\begin{rem} \label{eRR1} 
We should observe that formula~\eqref{feT3} does not imply that $ | \partial _y \Delta  u(t )  |_\beta$ is infinite for any given value of $0< t <\Tma$. On the other hand, it is stronger than saying that $\sup  _{ 0\le t\le T }
 |   \partial _y \Delta   u  (t ) |_{ \beta  }   < \infty $. 
 Similarly, Corollary~\ref{eT1} does not guarantee that $ u(t) \not \in H^s (\R^N )  $ for any given value of $0< t <\Tma$.
\end{rem} 

Before stating our next result, we make some comments on the proof of Theorem~\ref{eT3}.
The key idea is to consider equation~\eqref{NLH} as a perturbation of the ordinary differential equation 
\begin{equation} \label{fode1} 
w_t = \lambda  |w|^\alpha w 
\end{equation} 
with the same initial condition $w (0, \cdot )= \DI (\cdot )$. 
As we shall see by a straightforward calculation (see Section~\ref{sODE}), equation~\eqref{fode1}  produces a loss of spatial regularity. For example, in dimension $N=1$, if $\DI (x) =x$ in a neighborhood of $0$, then the resulting solution $w (t, x)$ of~\eqref{fode1}  will not be twice differentiable at $x=0$ for  $t>0$.
Moreover, for the perturbed equation 
\begin{equation}  \label{fode2} 
w_t = \lambda  |w|^\alpha w +h 
\end{equation} 
where $h$ is sufficiently smooth, the same loss of regularity occurs. 
(See Theorem~\ref{eODE2}.)
Let now $u$ be a solution of the nonlinear heat equation~\eqref{NLH}, and set $h= \Delta u$. 
It follows that $u_t= \lambda  |u|^\alpha u +h$. 
Thus we see that  for appropriate $\DI$,  if $h$  is sufficiently smooth, then $u (t)$ is not $C^2$ in space for small $t>0$. Since we know $u(t)$ is $C^2$ for $t>0$, this implies that $h = \Delta u$ is not too regular. Applying the precise regularity statement of Theorem~\ref{eODE2} gives the conclusion of 
Theorem~\ref{eT3}.

It turns out that the same arguments can be used to prove ill-posedness for the nonlinear Schr\"o\-din\-ger equation~\eqref{NLSb}. 
This yields the following analogue of Corollary~\ref{eT1}. 

\begin{thm} \label{eNLS} 
Let $0<\alpha <1$, $\lambda \in \C \setminus \{ 0\} $ and suppose $s> 3+ \frac {N} {2} + \alpha $. There exists $\DI \in C^\infty _\Comp (\R^N )$ such that there is no $T>0$ for which there exists a solution $u\in C([0,T], H^s (\R^N ) )$ of~\eqref{NLSb}.
\end{thm}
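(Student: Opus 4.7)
The plan is to copy the strategy of Theorem~\ref{eT3}, now treating~\eqref{NLSb} rather than~\eqref{NLH} as a perturbation of the ODE~\eqref{fode2}. Rewrite~\eqref{NLSb} in the equivalent form
\begin{equation*}
u_t = -i\lambda |u|^\alpha u + h, \qquad h \Eqdef -i\Delta u,
\end{equation*}
which is precisely~\eqref{fode2} with coupling constant $-i\lambda \in \C \setminus \{0\}$ and forcing term $h$. The ODE regularity result Theorem~\ref{eODE2} is formulated for arbitrary nonzero complex coupling, so it applies equally well to $-i\lambda$.

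Argue by contradiction. Fix $\DI \in C^\infty_\Comp (\R^N )$ chosen exactly as in the proof of Theorem~\ref{eT3} (the same compactly supported smooth profile that forces a loss of spatial regularity under the ODE flow), and suppose for the sake of contradiction that there exist $T>0$ and a solution $u \in C([0,T], H^s (\R^N ))$ of~\eqref{NLSb}. Since $s > 3 + \frac{N}{2} + \alpha$, pick $\beta$ with $\alpha < \beta < s - 3 - \frac{N}{2}$. The Sobolev embedding (see \eqref{fSob2}) yields $H^s (\R^N ) \hookrightarrow C^{3,\beta}(\R^N )$, so $u(t)$ has three classical spatial derivatives and the third derivatives are $\beta$-H\"older continuous uniformly on $[0,T]$. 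In particular
\begin{equation*}
\int_0^T | \partial _y \Delta u(\sigma ) |_\beta \, d\sigma < \infty,
\end{equation*}
and simultaneously $h = -i\Delta u \in C([0,T], C^{1,\beta}(\R^N ))$, which is ample regularity to feed into Theorem~\ref{eODE2}.

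Applying Theorem~\ref{eODE2} to the ODE $u_t = -i\lambda |u|^\alpha u + h$ with this $\DI$ and this $h$ gives exactly the conclusion \eqref{feT3}, namely
\begin{equation*}
\int _s ^t  | \partial _y \Delta  u(\sigma )  |_\beta \, d\sigma  =+\infty
\end{equation*}
for $0 \le s < t$ with $t$ sufficiently small, contradicting the bound above. Hence no such solution exists, and the theorem follows. The only step requiring care is checking that Theorem~\ref{eODE2} is insensitive to the replacement $\lambda \mapsto -i\lambda$ and that the regularity hypothesis it imposes on $h$ is truly covered by the $C^{1,\beta}$ control coming from the Sobolev embedding; the rest is the same bookkeeping already used to derive Corollary~\ref{eT1} from Theorem~\ref{eT3}.
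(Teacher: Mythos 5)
Your overall strategy is the paper's: rewrite~\eqref{NLSb} as $u_t=-i\Delta u-i\lambda|u|^\alpha u$, i.e.\ as the perturbed ODE~\eqref{fode2} with coupling $-i\lambda$ and forcing $h=-i\Delta u$, verify hypothesis~\eqref{fODE5} by Sobolev embedding, and let Theorem~\ref{eODE2} produce the contradiction. (The paper routes this through the more general Ginzburg--Landau equation~\eqref{GL}, with~\eqref{NLSb} as the case $\theta=-\pi/2$, but that is cosmetic.) There is, however, one genuine gap. Theorem~\ref{eODE2} requires $w(t,0)=0$ and $h(t,0)=0$ for \emph{all} $t\in[0,T]$, not only at $t=0$. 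In the application $w(t,y)=u(t,0,y)$, $h(t,y)=-i\Delta u(t,0,y)$, these identities come from the anti-symmetry $u(t,x',-y)\equiv -u(t,x',y)$ of the \emph{solution}, not merely of the datum. Since $-u(t,x',-y)$ is another solution with the same initial value, the solution inherits the anti-symmetry of $\DI$ only if solutions in $C([0,T],H^s(\R^N))$ are unique; and because the theorem asserts nonexistence of \emph{any} such solution, you must exclude a non-anti-symmetric one. The paper supplies exactly this in Remark~\ref{eRs12}: for $s>3+\frac{N}{2}$ any solution in $C([0,T],H^s(\R^N))$ satisfies the Duhamel integral equation and, using $H^s(\R^N)\hookrightarrow L^\infty(\R^N)$, is unique, hence anti-symmetric. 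Your proposal never addresses this, and ``the rest is the same bookkeeping'' does not cover it: for the heat equation the corresponding step is free from well-posedness in $\Cz$, whereas for NLS it must be proved in the $H^s$ class.

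A second, repairable, slip: Theorem~\ref{eODE2} does not ``give the conclusion~\eqref{feT3}''; its conclusion is~\eqref{fODE6}, the positive $\liminf$ of $|\partial_y w(t,y)-\partial_y w(t,0)|/|y|^\alpha$. As written your contradiction looks circular: you feed the finiteness of $\int_0^T|\partial_y\Delta u(\sigma)|_\beta\,d\sigma$ into the theorem as hypothesis~\eqref{fODE5} and claim the theorem returns its negation. The correct contradiction is between~\eqref{fODE6} and the fact (also from $H^s(\R^N)\hookrightarrow C^{3,\beta}$) that $\partial_y u(t,0,\cdot)$ is $C^1$, so the quotient in~\eqref{fODE6} tends to $0$ as $y\to 0$ because $\alpha<1$. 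You already have all the ingredients; just state the contradiction this way. Finally, the continuity of $w$, $w_t$, $h$, $\partial_y h$ demanded by Theorem~\ref{eODE2} should be checked explicitly from the equation, as in Remark~\ref{eRs12}~(i).
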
 

\begin{rem} \label{eRR1c} 
Theorem~\ref{eNLS} implies that $s\le  3+ \frac {N} {2} + \alpha $ is a necessary condition for~\eqref{NLSb}  to be locally well posed in $H^{s} (\R^N ) )$, 
 even in an arbitrarily small ball. See Remark~\ref{eRemSD}.
\end{rem} 

Theorem~\ref{eNLS} turns out to be a specific case of  an analogous result for the complex Ginzburg-Landau equation, see Theorem~\ref{eGL} below.

As pointed out in Remark~\ref{eRR1}, Theorem~\ref{eT3} and Corollary~\ref{eT1} do not guarantee the lack of spatial regularity of the solution $u$ of~\eqref{NLH}  at any fixed $t>0$. 
The following theorem gives an example of loss of spatial regularity for every $t>0$. 

\begin{thm} \label{eRN2} 
Let $0<\alpha <2$ and $\lambda \in \C \setminus \{0\}$.
There exists an initial value $\DI \in C^\infty _\Comp (\R^N )$ 
such that the corresponding maximal solution $u\in C([0, \Tma) , \Cz)$ of~\eqref{NLH} satisfies
$u(t) \not \in H^{s,p} (\R^N )$ for all sufficiently small $0 <t <\Tma$  if $1< p<\infty $ and $s > 5 + \frac {1} {p}$.
\end{thm}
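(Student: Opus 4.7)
My plan is to extend the ODE-perturbation strategy underlying Theorem~\ref{eT3}: write~\eqref{NLH} as $u_t = \lambda|u|^\alpha u + h$ with $h = \Delta u$, and compare the PDE solution $u$ with the solution $w$ of the pure ODE $w_t = \lambda|w|^\alpha w$, $w(0,\cdot) = \DI$. Compared to Theorem~\ref{eT3}, two new features are needed: the loss of regularity must be registered at \emph{every} sufficiently small $t > 0$ (not merely in a time-integrated sense), and the enlarged range $\alpha \in (0,2)$ must be exploited. Both of these are what raise the threshold to $s = 5 + 1/p$.

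First I would select $\DI \in C^\infty_\Comp(\R^N)$ that depends, on a neighbourhood of some distinguished point (take it to be the origin), only on the coordinate $x_1$, and is linear, $\DI(x) = x_1$, there. Integrating the scalar ODE $y' = \lambda|y|^\alpha y$ with $y(0) = x_1$ yields $w(t,x_1) = x_1(1 - \alpha\lambda t|x_1|^\alpha)^{-1/\alpha}$, whose expansion in powers of $t|x_1|^\alpha$ begins with
\[
w(t,x_1) = x_1 + \lambda t\, x_1|x_1|^\alpha + c_\alpha\, \lambda^2\, t^2\, x_1|x_1|^{2\alpha} + O(t^3).
\]
The $O(t^2)$ contribution has spatial regularity exactly $C^{1+2\alpha}$, which for $\alpha < 2$ lies strictly below $C^5$ and outside $H^{s,p}_\Loc(\R)$ for any $s > 1 + 2\alpha + 1/p$. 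As $\alpha \uparrow 2$ the threshold tends to $5 + 1/p$, matching the statement. Crucially, this singular profile carries a pure $t^2$ prefactor, so it is present at every $t > 0$, which is what permits a pointwise-in-$t$ conclusion rather than only an integrated one.

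Next, arguing by contradiction, assume that $u(t_0) \in H^{s,p}(\R^N)$ for some $s > 5 + 1/p$ and some arbitrarily small $t_0 > 0$. Combined with parabolic smoothing on the interval $(0,t_0)$, this forces the source $h = \Delta u$ to have, on a neighbourhood of the origin, regularity strictly above the $C^{1+2\alpha}$ profile of the singular ODE term. A refinement of Theorem~\ref{eODE2}, carrying the Duhamel iteration of the perturbed ODE $u_t = \lambda|u|^\alpha u + h$ to second order in $t$ and controlling the remainder uniformly for small $t$, then implies that the singular $t^2\, x_1|x_1|^{2\alpha}$ contribution of $w$ persists in $u$. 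This contradicts $u(t_0) \in C^5$ near the origin, which would follow from Sobolev embedding (reduced, if $N > 1$, to a one-dimensional slice through the origin via the trace theorem) under the assumed $H^{s,p}$ membership.

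The main obstacle is precisely this second-order refinement: Theorem~\ref{eT3} required only leading-order-in-$t$ bookkeeping integrated in time, whereas here one must carry the expansion of the perturbed ODE one step further and show, uniformly for small $t$, that the smoother source $h$ cannot cancel the $t^2$-order singularity inherited from the pure ODE. Managing this refined ODE-perturbation step, rather than the selection of $\DI$ or the final Sobolev/trace reduction, is where the substantive work of the proof lies.
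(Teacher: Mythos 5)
Your proposal rests on the claim that the second--order term $c_\alpha\lambda^2 t^2 x_1|x_1|^{2\alpha}$ of the pure ODE flow ``persists'' in the PDE solution, so that $u(t)$ is no better than $C^{1+2\alpha}$ near the origin. This is false, and in a way that cannot be repaired. By parabolic regularity (Theorem~\ref{Holder}), for $0<\alpha<1$ the solution $u(t)$ is $C^3$ in space with $\alpha$-H\"older third derivatives, hence locally in $H^{s,p}$ for every $s<3+\alpha$; since $1+2\alpha+\frac1p<3+\alpha$ for every $0<\alpha<1$ and $p>1$, the threshold your mechanism would produce contradicts regularity that the solution is known to have. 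The reason persistence fails is that the heat flow smooths the singularity the nonlinearity creates: already the first-order ODE term $\lambda t\,x_1|x_1|^\alpha$, which is only $C^{1+\alpha}$, does not survive in $u$ (otherwise $u(t)$ would not even be $C^2$). Equivalently, the source $h=\Delta u$ in the perturbed ODE is exactly singular enough to cancel the ODE's loss of regularity down to the $C^{3,\alpha}$ level --- that is the content of Theorem~\ref{eT3}, which shows $\int_0^t|\partial_y\Delta u|_\beta=+\infty$ for $\beta>\alpha$. For the same reason your contradiction hypothesis is too weak: assuming $u(t_0)\in H^{s,p}$ at a \emph{single} time $t_0$ gives no time-integrated H\"older control of $h=\Delta u$ on all of $[0,t_0]$ (parabolic smoothing does not propagate backwards), and such control is exactly what any argument in the spirit of Theorem~\ref{eODE2} requires through hypothesis~\eqref{fODE5}. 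This is precisely why the ODE-perturbation method yields only time-integrated conclusions (see Remark~\ref{eRR1}), and why the ``second-order refinement'' you identify as the main obstacle is not available.

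The paper's proof of Theorem~\ref{eRN2} therefore abandons the ODE viewpoint entirely and treats $\lambda|u|^\alpha u$ as a source for the \emph{linear} heat equation via Duhamel's formula. Writing $x=(x',y)$ and $|u|^\alpha u=|\eta(s,x')y|^\alpha\eta(s,x')y+\widetilde w$ with $|\widetilde w|\le C|y|^{\alpha+2}$, one computes explicitly $\partial_y^5\bigl[e^{\sigma\partial_y^2}(|y|^\alpha y)\bigr]_{y=0}=-C_\alpha\sigma^{-2+\alpha/2}$ with $C_\alpha>0$ exactly when $\alpha<2$; the resulting time integral $\int_0^t(\tau-s)^{-2+\alpha/2}\,ds$ diverges as $\tau\downarrow t$, which forces $\|u(t)\|_{H^{s,p}}=\infty$ for $s>5+\frac1p$ via the one-dimensional Sobolev embedding and~\eqref{fSob1}. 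The exponent $5+\frac1p$ thus arises as the number of $y$-derivatives needed to make the smoothed kernel non-integrable in time uniformly over $0<\alpha<2$, not as a limit of H\"older exponents $1+2\alpha$ of an ODE profile. You would need to rebuild your argument around this linear-Duhamel mechanism.
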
 

\begin{rem} \label{eReRN2} 
As observed for previous results, the initial value $\DI \in C^\infty _\Comp (\R^N )$ in Theorem~\ref{eRN2}  can be chosen arbitrarily small (in any space).  See Remark~\ref{eRemSDb}.
\end{rem} 

Unlike the proof of Theorem~\ref{eT1}, the proof of Theorem~\ref{eRN2} treats the nonlinear term $ |u|^\alpha u$ as a perturbation of the linear heat equation, via the standard Duhamel formula. 
More precisely, for appropriate initial values we show that the integral term 
\begin{equation} \label{fDU1} 
{\mathcal I} = \int _0^t e^{(t-s) \Delta }  |u(s)|^\alpha u(s) \, ds
\end{equation} 
can never be in $H^{s,p} (\R^N )$ if $s > 5 + \frac {1} {p}$.
One key idea in the proof is to express $ | u(t,x', y) |^\alpha u(t,x', y) = C(x') \gamma (t)  |y|^\alpha y +  \widetilde{w} (t,x', y)$ where $x'\in \R^{N-1}$ and $y\in \R$, with $ |  \widetilde{w} (t,x', y) |\le C  |y|^{\alpha +2}$. This decomposition enables us to explicitly compute $\partial ^5_y e^{\varepsilon \Delta } {\mathcal I} $ at $y=0$, which (if $\alpha <2$) goes to $\infty $ as $\varepsilon \downarrow 0$, uniformly for $x'$ in a neighborhood of $0$. This shows that $  {\mathcal I}  $ cannot be $C^5$ with respect to $y$, and the result then follows from the one-dimensional Sobolev embedding theorem.
We insist on this last point; since the proof is based on a one-dimensional argument, the condition on $s,p$ in the statement of Theorem~\ref{eT1} is independent of the space dimension $N$. 

On the other hand, in Corollary~\ref{eT1} the condition on $s,p$ does depend on the space dimension, since we deduce the result from Theorem~\ref{eT3} by the $N$-dimensional Sobolev embedding theorem.
This is perhaps only a technical problem. Indeed, the proof of Theorem~\ref{eT3} is also based on a one-dimensional argument. However, the structure of that proof, via an argument by contradiction, does not seem to allow the application of the one-dimensional Sobolev embedding theorem. 

For our last result, we introduce a very weak notion of local well-posedness for small data, which is weaker than the general notion described earlier in the introduction. 
Recall that~\eqref{NLH} is locally well-posed in $\Cz$, and  $\Tma (\DI ) $ is the maximal existence time of the solution corresponding to the initial value $\DI$.

\begin{defn} \label{eDhs} 
Let $s\ge 0$, $\alpha >0$ and $\lambda \in \C$. 
We say that~\eqref{NLH} is locally well posed for small data in $H^s (\R^N ) $ if there exist $\delta , T>0$ such that if $\DI \in C^\infty _\Comp (\R^N ) $ and $ \| \DI \| _{ H^s } \le \delta $, then the corresponding solution $u \in C([0, \Tma ), \Cz )$ of~\eqref{NLH} satisfies $\Tma (\DI) \ge T$ and $u(t) \in H^s (\R^N ) $ for all $0\le t\le T$. 
\end{defn} 

\begin{thm} \label{eIP1} 
Let $0< \alpha < 2$, and $\lambda \in\R$ with $\lambda >0$. If
\begin{equation} \label{fCOR1}  
N > 11 + \frac {4} {\alpha }.
\end{equation} 
then for every $s\ge 0$, the Cauchy problem~\eqref{NLH} is not locally well posed for small data in $H^s (\R^N ) $.
\end{thm}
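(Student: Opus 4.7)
The plan is to split the range $s \geq 0$ at the threshold $11/2$ and argue by contradiction in each subrange, combining Theorem~\ref{eRN2} (for $s > 11/2$) with a classical scaling-and-blow-up argument (for $s \leq 11/2$). The hypothesis $N > 11 + \frac{4}{\alpha}$ enters precisely to make these two regimes cover all of $[0, \infty)$: equivalently, the scaling-critical index $s_c \Eqdef \frac{N}{2} - \frac{2}{\alpha}$ exceeds $11/2$, so every $s \leq 11/2$ is $\dot H^s$-supercritical.

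Suppose for contradiction that \eqref{NLH} is locally well posed for small data in $H^s(\R^N)$ with parameters $\delta, T > 0$ as in Definition~\ref{eDhs}. If $s > 11/2$, I would apply Theorem~\ref{eRN2} with $p = 2$: it produces $\DI \in C^\infty_\Comp(\R^N)$ (by Remark~\ref{eReRN2}, arbitrarily small in $H^s$) whose corresponding solution satisfies $u(t) \not\in H^s(\R^N)$ for every sufficiently small $t > 0$, directly contradicting the well-posedness assumption.

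If $0 \leq s \leq 11/2$, I would argue via scale invariance and finite-time blow-up. Since $\lambda > 0$, I may pick $w_0 \in C^\infty_\Comp(\R^N)$ with $\Tma(w_0) = T_0 < \infty$ (classical for focusing NLH, e.g.\ by Levine's concavity method applied to a smooth bump of sufficiently large amplitude). For each $\mu > 0$ set $w_{0,\mu}(x) \Eqdef \mu^{2/\alpha} w_0(\mu x) \in C^\infty_\Comp(\R^N)$. The scale invariance of \eqref{NLH} gives $w_\mu(t,x) \Eqdef \mu^{2/\alpha} w(\mu^2 t, \mu x)$ as the corresponding solution, so that $\Tma(w_{0,\mu}) = \mu^{-2} T_0 \to 0$ as $\mu \to \infty$. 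A direct Fourier change of variables yields
\begin{equation*}
\|w_{0,\mu}\|_{L^2}^2 = \mu^{\frac{4}{\alpha} - N}\,\|w_0\|_{L^2}^2, \qquad \|w_{0,\mu}\|_{\dot H^s}^2 = \mu^{\frac{4}{\alpha} + 2s - N}\,\|w_0\|_{\dot H^s}^2,
\end{equation*}
and under $N > 11 + \frac{4}{\alpha}$ both exponents are strictly negative for every $s \leq 11/2$. Hence $\|w_{0,\mu}\|_{H^s} \to 0$, and choosing $\mu$ large enough I can arrange simultaneously $\|w_{0,\mu}\|_{H^s} \leq \delta$ and $\Tma(w_{0,\mu}) < T$, contradicting Definition~\ref{eDhs}.

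The only non-trivial ingredient is the existence of a $C^\infty_\Comp(\R^N)$ datum that blows up in finite time, which is classical for $\lambda > 0$ and can be invoked by citation; the rest is a bookkeeping of scaling exponents, with the threshold $N > 11 + 4/\alpha$ tailored exactly so that the ranges $\{s > 11/2\}$ and $\{s < s_c\}$ overlap and together exhaust $[0, \infty)$.
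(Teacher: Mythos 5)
Your proposal is correct and is essentially the paper's own proof: Theorem~\ref{eRN2} with $p=2$ rules out well-posedness for $s>11/2$, and for $s\le 11/2$ the hypothesis $N>11+4/\alpha$ gives $2/\alpha+s-N/2<0$, so the rescaled blow-up data $\mu^{2/\alpha}w_0(\mu\cdot)$ have $H^s$-norm tending to $0$ while their blow-up times tend to $0$, contradicting Definition~\ref{eDhs}. The paper merely phrases the case split as a single contradiction argument (well-posedness forces $2s\le 11$, then scaling finishes), and your exponent computations match its estimate~\eqref{LWPq}.
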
 

The rest of the paper is organized as follows. 
We recall below the definitions of the various function spaces we use, and certain of their properties. 
In Section~\ref{sODE} we study regularity of solutions to the ordinary differential equation~\eqref{fode1}, and to the perturbed equation~\eqref{fode2}.  In particular, we show (Theorem~\ref{eODE2}) that if $h$ is sufficiently smooth then~\eqref{fode1}  produces a singularity for a certain class of smooth data. In Section~\ref{sAppl} we apply this result to prove Theorems~\ref{eT3} and~\ref{eNLS}, as well as a similar result for a complex Ginzburg-Landau equation (Theorem~\ref{eGL}). 
In Section~\ref{sLast} we prove Theorems~\ref{eRN2} and~\ref{eIP1}.

One final remark about our results. Throughout this paper, we have considered small values of $\alpha $, either $0<\alpha <1$ or $0<\alpha <2$. 
It is likely that analogous results can be proved for larger values of $\alpha $.

\medskip 
\noindent {\bf Notation and and function spaces.} Throughout this paper, we consider function spaces of complex-valued functions.

$L^p (\R^N ) $, for $1\le p\le \infty $, is the usual Lebesgue space, with norm $  \| \cdot  \| _{ L^p } $. 
We denote by $\Cz$ the space of continuous functions on $\R^N $ that vanish at infinity, equipped with the sup norm. $H^{s, p} (\R^N ) $ and $H^s( \R^N ) = H^{s, 2} (\R^N ) $, for $s\ge 0$ and $1< p <\infty $ are the usual Sobolev spaces, and the corresponding norms are denoted by $  \| \cdot  \| _{ H^s } $ and $  \| \cdot  \| _{ H^{s, p} } $. 
In particular, 
\begin{equation} \label{fSob3} 
 \| u \| _{ H^{s, p} }  =  \| \Fmu [ (1+ |\xi |^2)^{\frac {s} {2}}  \widehat{u}  ] \|  _{ L^p }
\end{equation} 
and $ \| u \| _{ H^{s, p} }  \approx \sum_{  |\ell| \le s }  \| \partial ^\alpha u \| _{ L^p }$ if $s$ is an integer. (See e.g.~\cite[Theorem~6.2.3]{BerghL}.)

In the proof of Theorem~\ref{eRN2}, we use the property that if 
$u= u (x_1, x_2)$ with $x_1 \in \R^m$, $x_2 \in \R^n$, and if  $1< p<\infty $ and $s\ge 0$, then
\begin{equation} \label{fSob1} 
  \| u \| _{ L^p _{ x_1 } (\R^m, H^{s, p}( \R^n _{ x_2 }) ) } \le   C \| u \| _{ H^{s, p} (\R^{m+n} ) }  .
\end{equation}  
Inequality~\eqref{fSob1} with $C=1$ is immediate when $s$ is an integer. (The left-hand side has fewer terms than the right-hand side.)
The general case follows by complex interpolation. Indeed, suppose $s$ is not an integer,
 fix two integers $0\le s_0 < s < s_1$ and let $0< \theta <1$ be defined by $s= (1-\theta ) s_0 +\theta s_1$. It follows that 
 $H^{s,p} (\R^{m+n} ) = (H^{s_0, p} (\R^{m+n} ) , H^{s_1,p} (\R^{m+n} ) ) _{ [\theta ] }$
 and $H^{s,p} (\R^{n} ) = (H^{s_0, p} (\R^{n} ) , H^{s_1,p} (\R^{n} ) ) _{ [\theta ] }$. (See~\cite[Theorem~6.4.5]{BerghL}.)
 This last property implies that 
 \begin{equation*} 
 L^p ( \R^m,  H^{s,p} (\R^{n} )) = (L^p (\R^m, H^{s_0, p} (\R^{n} )) , L^p (\R^m, H^{s_1,p} (\R^{n} )) ) _{ [\theta ] }. 
 \end{equation*} 
 (See~\cite[Theorem~5.1.2]{BerghL}.)
Estimate~\eqref{fSob1} now follows by complex interpolation between the estimates for $s=s_0$ and $s=s_1$. 

We will use Sobolev's embedding into H\"older spaces. 
Recall definition~\eqref{fHO1}.
Given any $j\in \N$ and $0<\ell < 1$, the H\"older space $C^{j,\ell} (  \overline{\R^N }  )$ is the space of functions $u$ whose derivatives of order $\le j$ are all bounded and uniformly continuous, and such that $  | \partial ^\gamma  u |_\ell <\infty  $ for all multi-indices $\gamma $ with  $ |\gamma |=j$. 
$C^{j,\ell} (  \overline{\R^N }  )$ is a Banach space when equipped with the norm $  \|u \| _{ W^{j,\infty } } + \sum_{  |\gamma |=j }  |\partial ^\gamma  u |_\ell $. (See e.g.~\cite[Definition~1.29, p.~10]{AdamsF}.)
Given  $j\in \N$, $0< \ell <1$ and  $ s \in ( \ell+j,  \ell +j +N) $, 
It follows that 
\begin{equation} \label{fSob2} 
H^{ s , p(s) } (\R^N ) \hookrightarrow C^{j,\ell} (  \overline{\R^N }  )
\end{equation} 
where $p (s) = \frac {N} {s-j-\ell }\in (1,\infty )$.
Indeed, we may assume $j=0$, as the general case follows by iteration. 
Suppose first $s \ge 1$. 
Since $s  - \frac {N} { p(s)} =\ell  = 1  - \frac {N} {p(1 )}$, it follows from~\cite[Theorem~6.5.1, p.~153]{BerghL} that $H^{s,p (s) } (\R^N ) \hookrightarrow H^{1 ,p (1  ) } (\R^N ) $. The result now follows from the embedding $H^{ 1  , p(1   ) } (\R^N ) \hookrightarrow C^{0,\ell} (  \overline{\R^N }  )$. (See~\cite[Theorem~4.12 Part~II, p.~85]{AdamsF}.)
Let now $\ell    <s<1  $, and note that $H^{s,p (s) } (\R^N ) \hookrightarrow B^{s, p (s) }_\infty  (\R^N ) $, see~\cite[Theorem~6.2.4, p.~142]{BerghL}.
Setting $m=1$, we have $m-1 < \frac {N} {p(s)} < s < m$, and the result follows from 
the embedding $B^{s, p (s) }_\infty  (\R^N )\hookrightarrow C^{0,\ell} (  \overline{\R^N }  ) $.
(See~\cite[Theorem~7.37, p.~233]{AdamsF}.)

We denote by $(e^{t\Delta }) _{ t\ge 0 }$ the heat semigroup on $\R^N $, and we recall that $e^{t \Delta }$ is a contraction on $L^p (\R^N ) $ for all $1\le p\le \infty $. 
Using~\eqref{fSob3}, it follows immediately that
$(e^{t\Delta }) _{ t\ge 0 }$ is also a contraction semigroup on $H^{s, p}  (\R^N ) $ for all $s\ge 0$ and $1 < p<\infty $. 

\section{Spatial singularities and ODEs} \label{sODE} 

In this section we study how a certain class of ordinary differential equations lead to loss of regularity. More precisely, we consider equations~\eqref{fode1}  and~\eqref{fode2}, which are ODEs with respect to time, as acting on functions depending  on a space variable $  |x| \le 1$. In particular, the initial value $w (0, \cdot ) = \DIb (\cdot ) $ is a function $\DIb : [-1, 1] \to \C$. 
We wish to study the spatial regularity of $w (t, \cdot )$ as compared to the spatial regularity of $\DIb$. This is a different phenomenon than finite-time blowup. 
For example, consider the ODE-initial value problem
\begin{equation} \label{fO1} 
\begin{cases} 
w_t =\lambda   |w|^\alpha w \\ w( 0, x) = x,
\end{cases} 
\end{equation} 
with $ \lambda  \in \C$, $\lambda \not = 0$ and $  |x|\le 1$. 
If $\Re \lambda \not = 0$, then the solution of~\eqref{fO1} is given by
\begin{equation*} 
w(t,x) = \frac { x} {(1 - \alpha   t  | x |^\alpha \Re \lambda   )^{\frac {\lambda } {\alpha \Re \lambda }  }}.
\end{equation*} 
It follows that
\begin{gather*} 
w_x (t,x)=  (1 + i \alpha  t  | x |^\alpha \Im \lambda  )
 (1 - \alpha  t  | x |^\alpha \Re \lambda  )^{- \frac {\lambda +\alpha \Re \lambda } {\alpha \Re \lambda } } \\
w _{ xx } (t,x)  =  \alpha  t \frac { |x|^\alpha } {x}  (1 - \alpha  t  | x |^\alpha \Re \lambda  )^{- \frac {\lambda + 2\alpha \Re \lambda } {\alpha }  }   [  \lambda +\alpha \Re \lambda  + i\alpha  (\Re \lambda ) (1+ \lambda t |x|^\alpha ) ]
\end{gather*} 
for $x\not = 0$, as long as these formulas make sense. 
If $\Re \lambda  =0$, then the solution of~\eqref{fO1} is given by
\begin{equation*} 
w(t,x) = \exp ( i   t  | x |^\alpha \Im \lambda  ) \DIb (x) 
\end{equation*} 
and so,
\begin{gather*}
w_x (t,x)=   (1 + i \alpha  t  | x |^\alpha  \Im \lambda  ) e^{  i  t  | x |^\alpha  \Im \lambda  } \\
w _{ xx } (t,x)=  i \alpha   t  ( \Im \lambda  ) (1 + \alpha  +  i\alpha  t  | x |^\alpha  \Im \lambda  )  \frac { |x|^\alpha } {x}  e^{ i  t  | x |^\alpha  \Im \lambda  } 
\end{gather*} 
for $t\ge 0$ and $x\not = 0$. 
In both cases,  $w(t)$ is $C^1$ in $[-1,1]$ as long as it exists. However, if $\alpha <1$, we see that $w(t)$ fails to be twice differentiable at $x=0$, for  $t>0$. 

Somewhat surprisingly, it turns out that this loss of spatial regularity also occurs for regular perturbations of~\eqref{fO1}, as the following theorem shows.

\begin{thm} \label{eODE2} 
Let $0 <\alpha <1$, $\lambda \in \C \setminus \{ 0\}$,  $T>0$, $\DIb \in C^2 ([-1, 1], \C )$ and $h \in C ([0, T] \times [-1, 1] , \C )$ such that $\partial _y h \in C ([0, T] \times [-1, 1] , \C )$.
Suppose further that $\DIb (0)= 0$ and $h (t, 0 ) = 0$ for $0\le t\le T$.  By possibly assuming that $T>0$ is smaller, it follows that there exists a solution $ w \in C ^1 ([0, T] \times [-1, 1] , \C )$ of the equation
\begin{equation} \label{fODE4} 
\begin{cases} 
w _t =  \lambda   |w|^\alpha w + h  & 0\le t\le T, -1 \le y\le 1\\
w (0, y)= \DIb (y) & -1 \le y\le 1
\end{cases} 
\end{equation} 
If $\DIb ' (0) \not = 0$ and 
\begin{equation} \label{fODE5} 
\int _0^\tau  \sup  _{0 <  |y| \le 1}
\frac { | \partial _y h (t, y) - \partial _y h (t,0) |} { |y|^\beta }< \infty 
\end{equation} 
for some $0<\tau \le T$ and $\beta >\alpha $, then
\begin{equation}  \label{fODE6} 
\liminf _{   |y| \downarrow 0 } \frac { | \partial _y w (t, y) -\partial _y w (t,0) |} {  |y|^\alpha } >0
\end{equation} 
for all  sufficiently small $t>0$.
In particular, $w (t, \cdot )$ is not twice differentiable at $y=0$ for any sufficiently small $0<t\le T$.
\end{thm}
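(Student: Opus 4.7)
The approach is to differentiate the ODE~\eqref{fODE4} with respect to $y$ and track the leading singular behavior of $\partial_y w(t,y)$ near $y=0$. Because $\DIb(0)=0$ and $h(t,0)=0$, uniqueness for the ODE forces $w(t,0)\equiv 0$, so the $y$-differentiated equation at $y=0$ simplifies dramatically and yields the explicit formula $\partial_y w(t,0)=A(t):=\DIb'(0)+\int_0^t \partial_y h(s,0)\,ds$. Since $A(0)=\DIb'(0)\neq 0$, I expect $w(s,y)\approx A(s)\,y$ to first order in $y$, and feeding this expansion into $\lambda|w|^\alpha w$ is what produces the leading $|y|^\alpha$ contribution to $\partial_y w(t,y)-\partial_y w(t,0)$.

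Concretely, the plan has three steps. First, to produce a $C^1$ solution: since $\alpha>0$, the map $w\mapsto\lambda|w|^\alpha w$ is $C^1$ on $\C$, so for each $y$ the ODE admits a unique maximal solution; a uniform $L^\infty$ bound on $\DIb$ and $h$ yields a common existence time $T>0$ (shrinking if needed). Classical smooth dependence on parameters gives $C^1$ regularity for $y\neq 0$, and at $y=0$ the integral equation together with the bound $|\DIb(y)|+\sup_s|h(s,y)|\le C|y|$ and a Gronwall argument giving $|w(s,y)|\le C|y|$ uniformly in $s$ allows passage to the limit $w(t,y)/y\to A(t)$ and establishes continuity of $\partial_y w$ at $y=0$. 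Second, for $y\neq 0$ small I use the pointwise identity
\[
\partial_y\bigl(|w|^\alpha w\bigr)=(1+\tfrac{\alpha}{2})|w|^\alpha w_y+\tfrac{\alpha}{2}\,w^2\,|w|^{\alpha-2}\,\overline{w_y};
\]
substituting $w(s,y)=A(s)y+o(y)$ and $w_y(s,y)=A(s)+o(1)$ (both uniform in $s\in[0,T]$), and using that $y\in\R$ so that $y^2|y|^{\alpha-2}=|y|^\alpha$, gives
\[
\partial_y\bigl(|w|^\alpha w\bigr)(s,y)=(1+\alpha)\,|A(s)|^\alpha A(s)\,|y|^\alpha+o(|y|^\alpha),
\]
uniformly in $s$. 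Third, subtracting the integral equation for $\partial_y w$ at $y$ and at $0$ gives
\[
\partial_y w(t,y)-\partial_y w(t,0)=\lambda(1+\alpha)\,|y|^\alpha\int_0^t |A(s)|^\alpha A(s)\,ds+o(|y|^\alpha),
\]
because $\DIb'(y)-\DIb'(0)=O(|y|)=o(|y|^\alpha)$ and $\int_0^t[\partial_y h(s,y)-\partial_y h(s,0)]\,ds=O(|y|^\beta)=o(|y|^\alpha)$ by~\eqref{fODE5}. For $t>0$ sufficiently small $\int_0^t|A(s)|^\alpha A(s)\,ds$ is close to $t\,|\DIb'(0)|^\alpha\DIb'(0)\neq 0$, which gives~\eqref{fODE6}; the failure of twice differentiability at $y=0$ then follows, as existence of $\partial_y^2 w(t,0)$ would force the left-hand side of~\eqref{fODE6} to be $O(|y|)$, incompatible with $\alpha<1$.

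The main obstacle is the uniform-in-$s$ expansion in the second step: the factor $|w|^{\alpha-2}$ is a \emph{negative} power of $|w|$ since $\alpha<1$, so one must first secure a lower bound $|w(s,y)|\ge c|y|$ uniform for $s\in[0,T]$ and small $|y|$. This comes from $A(s)\neq 0$ for small $s$ (continuity of $A$ and $A(0)=\DIb'(0)\neq 0$) combined with the $C^1$ regularity established in step one. Once that lower bound is in hand, uniform continuity of $w$ and $\partial_y w$ on $[0,T]\times[-1,1]$ makes all $o(|y|)$ and $o(|y|^\alpha)$ remainders uniform in $s$, justifying the exchange of limit and integral in the final step.
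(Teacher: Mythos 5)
Your argument is correct, and it rests on the same foundation as the paper's proof: differentiate \eqref{fODE4} in $y$, use $w(t,0)\equiv 0$ and $\partial _y w(t,0)=A(t)$ with $A(0)=\DIb '(0)\neq 0$ to get $w(s,y)=A(s)y+o(y)$ and $\partial _y w(s,y)=A(s)+o(1)$ uniformly in $s$, and then show that the nonlinearity injects a non-cancelling $|y|^\alpha$ contribution while $\DIb\in C^2$ and hypothesis~\eqref{fODE5} keep the remaining contributions at $o(|y|^\alpha)$ (since $\beta>\alpha$ and $1>\alpha$). Where you genuinely diverge is in the treatment of the two pieces of $\partial _y(|w|^\alpha w)=\frac{\alpha+2}{2}|w|^\alpha w_y+\frac{\alpha}{2}|w|^{\alpha-2}w^2\overline{w_y}$. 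The paper solves the equation for $v=\partial _y w$ with the integrating factor $e^{A(t,y)}$, $A(t,y)=\lambda\frac{\alpha+2}{2}\int_0^t|w(\sigma,y)|^\alpha d\sigma$, extracts the main term from $[e^{A(t,y)}-1]\DIb '(0)$, and only bounds the \emph{modulus} of the anti-linear term $\frac{\alpha}{2}|w|^{\alpha-2}w^2\overline{v}$; positivity of the final liminf then comes from the arithmetic fact $\frac{\alpha+2}{2}-\frac{\alpha}{2}=1>0$. You instead integrate the differentiated equation directly and compute the exact leading asymptotics of \emph{both} terms, observing that $\overline{w_y}\approx \overline{w}/y$ to leading order makes the anti-linear term equal to the linear one, so they reinforce with total coefficient $1+\alpha$; this yields the sharper conclusion that the liminf in~\eqref{fODE6} is an actual limit, equal to $(1+\alpha)\,|\lambda|\,\bigl|\int_0^t|A(s)|^\alpha A(s)\,ds\bigr|$. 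The price is that your expansion must control $w/|w|$, hence needs the uniform lower bound $|w(s,y)|\ge c|y|$, which you correctly flag and obtain by shrinking $T$ so that $|A(s)|$ stays away from $0$; the paper's absolute-value bound needs only the upper estimate $|w(s,y)-\DIb '(0)y|\le\varepsilon|y|$ and so is marginally more robust, but your version is, if anything, more informative.
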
 

\begin{proof} 
The existence of the solution $w$ is straightforward, and 
\begin{equation} \label{ofS1} 
w (t, 0) =0 
\end{equation} 
for all $0 \le t \le T$.
For the rest of the proof, we consider for simplicity $0\le  y \le 1$, the extension to $-1\le  y \le 0$
will be clear. 
Set $f (t, y) = \partial _y h  (t, y)$ and $v (t, y)= \partial _y w (t, y)$, so that 
$v, v_t , f\in C ([0,T] \times [0,1] )$.
Differentiating equation~\eqref{fODE4}  with respect to $y$ yields
\begin{equation} \label{osfF02}
 v_t = \lambda \frac {\alpha +2} {2}  | w |^\alpha v +  \lambda \frac {\alpha } {2}  | w |^{\alpha -2} w ^2  \overline{v} +  f ,
\end{equation} 
pointwise in $[0,T] \times [0,1] $. Integrating~\eqref{osfF02} and setting
\begin{equation} \label{osfF02b1}
g =   \lambda   \frac {\alpha } {2}  | w |^{\alpha -2} w ^2  \overline{v} +  f 
\end{equation} 
we obtain
\begin{equation} \label{osfF03}
v(t, y )= e^{A(t, y )}   \DIb ' ( y )+ \int _0^t e^{A(t, y )- A(s, y )} g(s, y )\,ds,
\end{equation} 
pointwise on $[0,T] \times [0,1]$, where 
\begin{equation} \label{osfF04}
A(t, y )=  \lambda  \frac {\alpha +2} {2}  \int _0^t  | w (\sigma , y)|^\alpha d\sigma .
\end{equation} 
We note that $A(t,0)= 0$ by~\eqref{ofS1}  so that
\begin{equation} \label{osfF03b1}
v(t,0)=   \DIb '  ( 0)+ \int _0^t  g(s,0)\,ds,
\end{equation} 
for all $0\le t \le T$. 
Given $y>0 $, it follows from~\eqref{osfF03} and~\eqref{osfF03b1} that
\begin{multline} \label{osfF06}
v(t, y ) - v(t,0) = e^{A(t, y )} [ \DIb '  ( y) -  \DIb ' (0)] + [e^{A(t, y )} -1] \DIb ' ( 0 ) \\ + \int _0^t  \Bigl( e^{A(t, y )- A(s, y )}[  g(s, y )  - g (s, 0)] + [e^{A(t, y )- A(s, y )} -1] g(s, 0) \Bigr) \,ds.
\end{multline} 
Observe that, by assumption, $\DIb (  0)=0$ and  
\begin{equation} \label{osfFG09}
z \Eqdef \DIb ' ( 0) \not = 0.
\end{equation} 
Thus for every $\varepsilon \in (0,1)$ there exists $0 <\delta (\varepsilon ) \le \min\{  T, \varepsilon  \}$ such that 
\begin{equation} \label{osfFG10}
 |  v  (t, y ) -z| \le \varepsilon 
\end{equation} 
in the region 
\begin{equation}  \label{osfFG11}
\Delta _\varepsilon = \{ (t,y)\in [0,T] \times [0,1]; \, 0\le t\le \delta (\varepsilon ),\,   0 \le y \le  \delta (\varepsilon ) \}.
\end{equation} 
Estimates~\eqref{ofS1} and~\eqref{osfFG10} yield
\begin{equation} \label{osfFG12}
 | w (t , y) -z y| \le \varepsilon  y
\end{equation} 
in $\Delta _\varepsilon $. 
Next, we observe that by~\eqref{osfF04}, the inequality $ |\,  |z_1|^\alpha - |z_2|^\alpha |\le   |z_1 -z_2|^\alpha $  and~\eqref{osfFG12} 
\begin{equation} \label{ofNs2} 
\begin{split} 
  \Bigl| A(t,y) - \lambda \frac {\alpha +2} {2} t  |z|^\alpha y^\alpha  \Bigr|
& \le  |\lambda |  \frac {\alpha +2} {2} \int _0^t  | \,| w (\sigma  , y)|^\alpha -  |yz|^\alpha   | \, d\sigma\\
& \le  |\lambda |  \frac {\alpha +2} {2} \int _0^t  | w (\sigma , y)- yz |^\alpha  \, d\sigma  \\ & \le 
|\lambda |  \frac {\alpha +2} {2} t \varepsilon ^\alpha  y ^\alpha  
\end{split} 
\end{equation} 
in $\Delta _\varepsilon $. 
In particular, $A$ is bounded. Since $\DIb$ is $C^2$, it follows that
\begin{equation} \label{osfF08b2}
 |e^{A(t, y )} [  \DIb '  ( y ) -  \DIb '  ( 0)] | \le C  y
\end{equation} 
in $\Delta _\varepsilon $.
Moreover,
\begin{equation}  \label{ofNs3} 
|  e^{A(t, y )} -1 - A(t,y) | \le C  | A (t, y)|^2 \le C t^2 y^{2\alpha }
\end{equation} 
in $\Delta _\varepsilon $, where we used~\eqref{ofNs2} in the last inequality.
It follows from~\eqref{ofNs3} and~\eqref{ofNs2} that
\begin{equation} \label{ofcinq16} 
 |[e^{A(t, y )} -1]  \DIb ' ( 0 )| \ge   |\lambda | \frac {\alpha +2} {2}   |z| ( |z|^\alpha -\varepsilon ^\alpha ) t y^\alpha - C t^2 y^{2\alpha }.
\end{equation} 
Next, using again the boundedness of $A$ in $\Delta _\varepsilon $, we 
deduce from~\eqref{fODE5} that
\begin{equation}  \label{osfF08b4}
 \Bigl| \int _0^t  e^{A(t, y )- A(s, y )}[  f(s, y )  - f(s, 0)] \Bigr| \le  C  y^{\beta }
\end{equation} 
in $\Delta _\varepsilon $.
Using~\eqref{ofNs2}, we see that
\begin{equation} \label{osfF08b5}
 | e^{A(t, y )- A(s, y )} | \le 1 + C t y^\alpha 
\end{equation}  
for $0<s<t$ and $(t,y)\in \Delta _\varepsilon $. Moreover, it follows from~\eqref{osfFG10} and~\eqref{osfFG12} that
\begin{equation} \label{osfF08b6}
|  | w |^{\alpha -2} w ^2  \overline{v}  | \le ( |z|+ \varepsilon )^{\alpha +1} y^\alpha 
\end{equation} 
in $\Delta_\varepsilon $. We deduce from~\eqref{osfF02b1}, \eqref{osfF08b4}, \eqref{osfF08b5}   and~\eqref{osfF08b6} that
\begin{equation} \label{osfF08b7} 
\begin{split} 
 \Bigl| \int _0^t  e^{A(t, y )- A(s, y )}[  g(s, y )  - g(s, 0)] \Bigr| & \le  C  y^{\beta }
 \\ & +  |\lambda | \frac {\alpha } {2} (1+Cty^\alpha ) ( |z|+ \varepsilon )^{\alpha +1} t y^\alpha . 
\end{split} 
\end{equation} 
Next, 
\begin{equation*} 
 \begin{split} 
 |e^{A(t, y )- A(s, y )} -1| & \le C | A(t, y )- A(s, y ) | \le C \int _s ^t  | w (\sigma , 0, y )|^\alpha \\
& \le C (t-s)  |y|^\alpha  
 \end{split} 
\end{equation*} 
where we used~\eqref{osfFG12} in the last inequality. Since $g$ is bounded, it follows that
\begin{equation}  \label{osfF08b5b}
 \Bigl| \int _0^t   [e^{A(t, y )- A(s, y )} -1] g(s, 0)  \,ds \Bigr| \le C t^2  y^\alpha .
\end{equation}  
Finally, we observe that the various terms in the right-hand side of~\eqref{osfF06} are estimated by~\eqref{osfF08b2}, \eqref{ofcinq16}, \eqref{osfF08b7}   and~\eqref{osfF08b5b}, and we deduce that
\begin{equation*} 
\begin{split} 
\frac { |v(t, y ) - v(t,0)| } {y^\alpha } & \ge   |\lambda | \frac {\alpha +2} {2}   |z| ( |z|^\alpha -\varepsilon ^\alpha ) t   - C t^2 y^{\alpha } - Cy - C y^{\beta -\alpha } \\
 & - |\lambda | \frac {\alpha } {2} (1+Cty^\alpha ) ( |z|+ \varepsilon )^{\alpha +1} t   - C t^2 
\end{split} 
\end{equation*} 
in $\Delta _\varepsilon $. 
It follows that
\begin{equation*} 
\liminf  _{ y\downarrow 0 } \frac { |v(t, y ) - v(t,0)| } {y^\alpha }   \ge t  |\lambda |  \Bigl( \frac {\alpha +2} {2}   |z| ( |z|^\alpha -\varepsilon ^\alpha )  -  | \frac {\alpha } {2}  ( |z|+ \varepsilon )^{\alpha +1}   \Bigr) - Ct^2 .
\end{equation*} 
Choosing $\varepsilon >0$ and $t>0$ sufficiently small, we see that 
\begin{equation*} 
\liminf  _{ y\downarrow 0 } \frac { |v(t, y ) - v(t,0)| } {y^\alpha }   \ge t  |\lambda |  \frac { |z|^{\alpha +1}} {2}
\end{equation*}
from which estimate~\eqref{fODE6} follows.
\end{proof} 
\begin{rem} 
The assumption that $\DIb \in C^2 ([0, 1], \C )$ is used only once in the proof, see~\eqref{osfF08b2}. It could be replaced by the weaker condition $\DIb \in C^{1, \mu }([0,1], \C )$ with $\alpha <\mu <1$.
\end{rem} 

\section{Semilinear equations as perturbations of an ODE} \label{sAppl} 
In this section we show that Theorem~\ref{eODE2} easily implies Theorems~\ref{eT3} and~\ref{eNLS}.

\begin{proof} [Proof of Theorem~$\ref{eT3}$]
We recall that if $\Delta \DI \in \Cz$, $\DI \in C^3 (\R^N ) \cap W^{3, \infty }( \R^N )$, and $  |\partial ^\gamma u|_\alpha <\infty  $ for all multi-indices $\gamma $ such that $ |\gamma |=3$, then $u$ is once continuously differentiable with respect to $t$, three times continuously differentiable with respect to $x$,  $u_t$ is $\frac {\alpha } {2}$-H\"older continuous in $t$ and 
$\sup _{ 0\le t\le T }  |\partial ^\gamma u(t) |_\alpha <\infty $ for $ |\gamma |=3$ and $0<T<\Tma$. See Theorem~\ref{Holder} below for a precise statement. 
We write the variable in $\R^N $ in the form $x= (x',  y )$, $x'\in \R^{N-1}$, $y \in \R$.
Accordingly, we write $\DI (x)= \DI ( x', y )$ and $u(t, x)= u (t, x', y )$.

Let $\DI \in \Cz$ with $\Delta \DI \in \Cz$.
Suppose further that $ \DI ( x', -y ) = -\DI (x', y ) $ for all $x' \in \R^{N -1} $ and $y \in \R$, 
 and $ \partial  _y \DI (0,0) \not = 0$. 
Note that $u$ inherits the anti-symmetry of the initial condition, i.e. $u (t, x', -y ) \equiv - u(t, x', y )$
for all $0< t< \Tma$. Moreover, there exists $0< t_0< \Tma$ such that $\partial _y u (t, 0, 0)\not = 0$
for all $0\le t\le t_0$. 
Thus we see that for all $0\le t\le t_0$, $u(t)$ satisfies the same assumptions as $\DI$. 
Therefore, it suffices to prove~\eqref{feT3} for $s=0$. 
Assume by contradiction that
\begin{equation}  \label{fS2} 
\int _0 ^T  | \partial _y \Delta  u(t )  |_\beta  < \infty 
\end{equation} 
for some $\alpha <\beta \le 1$ and  $0<T < t_0$.
We apply Theorem~\ref{eODE2} with   $w (t, y ) = u (t, 0, y)$ and $h (t, y)= \Delta u (t, 0, y)$. 
The anti-symmetry property of $u$ implies that $w (t, 0) = h(t,0) =0$ for all $0\le t<\Tma$.
Moreover, it follows from~\eqref{fS2} that the assumption~\eqref{fODE5} is satisfied.
Therefore,  we deduce from~\eqref{fODE6}    
that if $t_0>0$ is sufficiently small, then 
\begin{equation*} 
\limsup  _{  |y| \to 0 } \frac { | \partial _y w (t, y ) - \partial _y w (t,0)|} { |y|^\alpha }>0
\end{equation*} 
for $0<t <t_0$. 
 Since $\partial _y w (t, y)= \partial _y u (t,0, y)$ is $C^1$ in $y $, this yields a contradiction.
The result follows, since we can choose $\DI \in C^\infty _\Comp (\R^N )$ as above.
\end{proof} 

\begin{rem} 
If $\lambda \in \R \setminus \{0\}$, then the statement of Theorem~\ref{eT3} can be improved in the sense that there exists an initial value $\DI \in C^\infty _\Comp (\R^N )$ 
for which~\eqref{feT3}  holds for all $0\le s<t < \Tma$. (We do not require that $t$ is small.)
Indeed, let $\DI \in \Cz$ with $\Delta \DI \in \Cz$.
Suppose further that $ \DI ( x', -y ) = -\DI (x', y ) $ for all $x' \in \R^{N -1} $ and $y \in \R$, $\DI (x', y) \ge 0$ for all $x' \in \R^{N -1} $ and $y >0$, and $ \partial  _y \DI (0,0) >0$. 
Since $u$ inherits the anti-symmetry of the initial condition, i.e. $u (t, x', -y ) \equiv - u(t, x', y )$,
it follows that, restricted to the open half space $\R^N _+ = \R^{N-1} \times (0,\infty )$, $u$ is a solution of the Dirichlet initial value problem on $\R^N _+ $. In particular,  $u(t,x', y ) > 0$ for $y > 0$, and $\partial _y  u (t, x', 0) > 0$.
Thus we see that for all $0\le t<\Tma$, $u(t)$ satisfies the same assumptions as $\DI$, and we can conclude as above.
\end{rem} 

Ne turn next to Theorem~\ref{eNLS}. 
Equation~\eqref{NLSb} is a particular case of the following nonlinear complex Ginzburg-Landau  equation
\begin{equation} \label{GL} 
\begin{cases} 
u_t = e^{i\theta } \Delta u +  \lambda  |u|^\alpha u \\ u(0)= \DI
\end{cases} 
\end{equation} 
in $\R^N $, where $-\frac {\pi } {2}\le   \theta \le \frac {\pi } {2}$, and $0 < \alpha <1$.
Theorem~\ref{eNLS} is therefore a consequence of the following result.

\begin{thm} \label{eGL} 
Let $0<\lambda <1$,  $\lambda \in \C \setminus \{0\}$ and  $-\frac {\pi } {2}\le   \theta \le \frac {\pi } {2}$. Suppose $s> 3+ \frac {N} {2} + \alpha $.
There exists $\DI \in C^\infty _\Comp (\R^N )$ such that there is no $T>0$ for which there exists a solution $u\in C([0,T], H^s (\R^N ) )$ of~\eqref{GL}.
\end{thm}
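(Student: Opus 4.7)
The plan is to mirror the proof of Theorem~\ref{eT3}, viewing~\eqref{GL} as a perturbation of the ODE $w_t = \lambda|w|^\alpha w$ by the forcing $h = e^{i\theta}\Delta u$, restricted to a line through the origin, and then invoking Theorem~\ref{eODE2}.

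Fix $\beta$ with $\alpha < \beta < \min\{1, s - 3 - \frac{N}{2}\}$; this interval is nonempty since $s > 3 + \frac{N}{2} + \alpha$. By~\eqref{fSob2} applied with $p = 2$, $j = 3$, $\ell = \beta$, one has $H^s(\R^N) \hookrightarrow C^{3,\beta}(\overline{\R^N})$. Choose $\DI \in C^\infty_\Comp(\R^N)$ antisymmetric in its last coordinate, i.e.\ $\DI(x', -y) = -\DI(x', y)$ for $x' \in \R^{N-1}$ and $y \in \R$, with $\partial_y \DI(0,0) \ne 0$. Suppose by contradiction that, for some $T > 0$, there is a solution $u \in C([0,T], H^s(\R^N))$ of~\eqref{GL}. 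Then $u(t) \in C^{3,\beta}(\overline{\R^N})$ uniformly in $t \in [0,T]$, and the Duhamel formula together with this regularity yields $u_t = e^{i\theta}\Delta u + \lambda|u|^\alpha u$ pointwise on $[0,T] \times \R^N$. Using the identity $\Re\langle e^{i\theta}\Delta v, v\rangle_{L^2} = -\cos\theta\,\|\nabla v\|_{L^2}^2 \le 0$ together with the Lipschitz property of $|u|^\alpha u$ on $L^\infty$-bounded sets (valid since $H^s \hookrightarrow L^\infty$), a standard Gronwall argument in $L^2$ gives uniqueness of $u$ in $C([0,T], H^s)$. Since the transformation $u(t,x',y) \mapsto -u(t,x',-y)$ preserves both~\eqref{GL} and the initial data, uniqueness forces $u(t,x',-y) = -u(t,x',y)$; in particular, $u(t,0,0) = (\Delta u)(t,0,0) = 0$ for all $t \in [0,T]$.

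Set $w(t,y) = u(t,0,y)$ and $h(t,y) = e^{i\theta}(\Delta u)(t,0,y)$ on $[0,T] \times [-1,1]$; restricting the pointwise PDE to $x' = 0$ yields $w_t = \lambda|w|^\alpha w + h$. The hypotheses of Theorem~\ref{eODE2} are now routinely verified: $w(0, \cdot) = \DI(0, \cdot) \in C^2([-1,1])$ with $\partial_y w(0,0) \ne 0$; both $h$ and $\partial_y h(t,y) = e^{i\theta}\partial_y \Delta u(t,0,y)$ are continuous on $[0,T] \times [-1,1]$ with $w(0,0) = h(t,0) = 0$; and since $\partial_y \Delta u(t, \cdot)$ is $\beta$-H\"older in $y$ with seminorm bounded uniformly in $t$ by $\|u(t)\|_{C^{3,\beta}}$, the integrability condition~\eqref{fODE5} holds for this choice of $\beta > \alpha$. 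Theorem~\ref{eODE2} then forces $w(t, \cdot)$ to fail twice-differentiability at $y = 0$ for every sufficiently small $t > 0$, which contradicts the fact that $w(t, \cdot) = u(t, 0, \cdot) \in C^3([-1,1])$ inherited from $u(t) \in C^{3,\beta}$.

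The main obstacle is securing uniqueness of $u$ in $C([0,T], H^s)$ \emph{uniformly} across the whole range $\theta \in [-\frac{\pi}{2}, \frac{\pi}{2}]$, so that the antisymmetry of $\DI$ is inherited by $u$. The dissipative case $|\theta| < \frac{\pi}{2}$ is standard, while the pure Schr\"odinger endpoints $\theta = \pm \frac{\pi}{2}$ require the $L^2$-energy argument sketched above; this works uniformly because $\cos\theta \ge 0$ throughout the range, and the $L^\infty$ bound from $H^s \hookrightarrow L^\infty$ converts the merely $\alpha$-H\"older nonlinearity into a genuinely Lipschitz perturbation on the orbit in $L^2$. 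A secondary check is that $H^s$-regularity indeed implies a pointwise PDE so that the restriction to the $y$-axis is a bona fide ODE in the sense required by Theorem~\ref{eODE2}; this is immediate from $u \in C([0,T], C^{3,\beta}) \subset C([0,T], C^2)$ together with the Duhamel identity.
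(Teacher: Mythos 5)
Your proposal is correct and follows essentially the same route as the paper: antisymmetric data in the last variable, restriction of the pointwise equation to the $y$-axis, verification of~\eqref{fODE5} via the Sobolev embedding $H^s\hookrightarrow C^{3,\beta}$, and a contradiction with Theorem~\ref{eODE2}. The only difference is cosmetic: you spell out the $L^2$--Gronwall uniqueness argument (needed to propagate the antisymmetry) that the paper delegates to Remark~\ref{eRs12}, and your choice $\beta<\min\{1,s-3-\tfrac N2\}$ is a slightly more careful reading of the embedding than the paper's exponent $s-3-\tfrac N2$.
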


\begin{rem} \label{eRs12} 
\begin{enumerate}[{\rm (i)}] 

\item \label{eRs12:2}
Suppose $s> 3 + \frac {N} {2}$ (so that $H^s (\R^N ) \subset C^3(\R^N ) \cap  W^{3, \infty } (\R^N ) $). If  $u\in C([0,T], H^s (\R^N ) )$, then $\Delta u,    |u|^\alpha u \in C([0,T] \times \R^N ) \cap C([0,T], L^2 (\R^N ) )$ and $\nabla  \Delta u, \nabla  ( |u|^\alpha u )\in C([0,T] \times \R^N )$. Therefore, equation~\eqref{GL} makes sense for such a $u$. Furthermore,  $u_t \in  C([0,T] \times \R^N ) \cap C([0,T], L^2 (\R^N ) )$ and $\nabla u_t \in  C([0,T] \times \R^N )$. 

\item \label{eRs12:3}
 It follows from the preceding observation that it makes sense to talk of a solution of~\eqref{GL} in  $C([0,T], H^s (\R^N ) )$ if $s> 3 + \frac {N} {2}$. Such a solution satisfies the integral equation
\begin{equation*} 
u(t)= e^{it \Delta }\DI + i \lambda \int _0^t e^{i(t-s) \Delta }( |u|^\alpha u) (s)\, ds.
\end{equation*} 
Using the embedding $H^s (\R^N ) \hookrightarrow L^\infty  (\R^N ) $, it follows easily that such a solution is unique.
In particular, if $\DI $ is anti-symmetric in the last variable, then so is any solution $u\in C([0,T], H^s (\R^N ) )$ of~\eqref{GL}. 

\end{enumerate} 
\end{rem} 

\begin{proof} [Proof of Theorem~$\ref{eGL}$]
We write the variable in $\R^N $ in the form $x= (x',  y )$, $x'\in \R^{N-1}$, $y \in \R$.
Accordingly, we write $\DI (x)= \DI ( x', y )$ and $u(t, x)= u (t, x', y )$.

Let $\DI \in C^\infty _\Comp (\R^N )$ and 
suppose  that $ \DI ( x', -y ) = -\DI (x', y ) $ for all $x' \in \R^{N -1} $ and $y \in \R$, and $ \partial  _y \DI (0,0) >0$. 
Assume by contradiction that there exist $T>0$, $s> 3+\frac {N} {2}+ \alpha $, and a solution $u\in C([0,T], H^s (\R^N ) )$ of~\eqref{GL}.  
As observed in Remark~\ref{eRs12}~\eqref{eRs12:3}, it follows in particular that 
 $u (t, x', -y ) \equiv - u(t, x', y )$ for all $0\le t\le T$.
We apply Theorem~\ref{eODE2} with 
 $w (t, y ) = u (t, 0, y)$ and $h (t, y)= e^{i\theta } \Delta u (t, 0, y)$. 
The regularity assumptions on $w$ and $h$ are satisfied by Remark~\ref{eRs12}~\eqref{eRs12:2}.
The anti-symmetry property of $u$ imply that $w (t, 0) = h(t,0) =0$ for all $0\le t<\Tma$.
Moreover, it follows from Sobolev's embedding theorem (see~\eqref{fSob2}) that
\begin{equation*}
 | \partial _y h(t, y) - \partial _y h(t, 0)| \le C  |y|^{s-3-\frac {N} {2}}
\end{equation*} 
for all $t\in [0,T]$ and $y\in \R$, so that the assumption~\eqref{fODE5} is satisfied.
Therefore,  we deduce from~\eqref{fODE6}    
that if $t_0>0$ is sufficiently small, then 
\begin{equation*} 
\limsup  _{  |y| \to 0 } \frac { |v(t, y ) - v(t,0)|} { |y|^\alpha }>0
\end{equation*} 
for $0<t <t_0$. 
 Since $v (t, y)= \partial _y u (t,0, y)$ is $C^1$ in $y $ by Remark~\ref{eRs12}~\eqref{eRs12:2}, this yields a contradiction.
\end{proof} 

\begin{rem} \label{eRemSD} 
Observe that if $\DI $ is as in the  proof of either Theorem~\ref{eT3} or Theorem~\ref{eGL}, 
then so is $\varepsilon \DI$ for all $\varepsilon \not = 0$.
\end{rem} 

\section{Time-pointwise lack of regularity: the heat equation}
\label{sLast} 

In this section we prove Theorems~\ref{eRN2} and~\ref{eIP1}.
As motivation for the proof of Theorem~\ref{eRN2},  consider an initial value $\DI \in C^\infty _\Comp (\R^N )$ which is odd with respect to the $N^{th}$ variable $y$ and 
 such that $\DI (x', y) =C(x') y$ for small $ |y|$, 
and  write $ | u(t,x', y) |^\alpha u(t,x', y) = C(x') \gamma (t)  |y|^\alpha y +  \widetilde{w} (t,x', y)$ where $ |  \widetilde{w} (t,x', y) |\le C  |y|^{\alpha +2}$. 
This decomposition makes it possible to explicitly calculate $\partial _y^5 [e^{(t-s) \Delta }  |u|^\alpha u] _{ y=0 }$. Incorporating this result into the integral~\eqref{fDU1} yields the desired property. 

The following two lemmas show explicitly how these ideas are implemented.

\begin{lem} \label{eCal1} 
Let $\psi \in C (\R )$ such that $ |\psi (x)|\le C (1+  |x|^m)$ for some $m\ge 0$. Let $\sigma >0$ and set
\begin{equation} 
z = \Ec{\frac {\sigma } {4} } \psi \in C^\infty (\R).
\end{equation} 
It follows that
\begin{equation} \label{fF01b} 
 \partial _x^{5} z(0)  = 8 \pi ^{-\frac {1} {2}} \sigma ^{-3} \int _\R e^{- { y^2}}  [ 
15 - 20 y^2 + 4 y^4 ] (y \sqrt \sigma ) \psi  (y \sqrt \sigma ) dy .
\end{equation} 
\end{lem}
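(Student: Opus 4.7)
The plan is to use the heat kernel representation $z(x) = (4\pi t)^{-1/2}\int_\R e^{-(x-u)^2/(4t)}\psi(u)\,du$ with $t=\sigma/4$, giving
\begin{equation*}
z(x) = (\pi\sigma)^{-1/2}\int_\R e^{-(x-u)^2/\sigma}\psi(u)\,du,
\end{equation*}
and then simply differentiate five times under the integral sign. Differentiation under the integral is legitimate because each derivative $\partial_x^n e^{-(x-u)^2/\sigma}$ is a polynomial in $(x-u)$ times a Gaussian in $(x-u)/\sqrt{\sigma}$, so the Gaussian decay dominates the polynomial growth $|\psi(u)|\le C(1+|u|^m)$ uniformly for $x$ in a neighborhood of $0$.

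Next, I would perform the $x$-derivatives using the scaling $\partial_x e^{-(x-u)^2/\sigma} = \sigma^{-1/2}(\partial_\xi e^{-\xi^2})|_{\xi=(x-u)/\sqrt\sigma}$. Iterating gives $\partial_x^5 e^{-(x-u)^2/\sigma} = \sigma^{-5/2}(\partial_\xi^5 e^{-\xi^2})|_{\xi=(x-u)/\sqrt\sigma}$. The derivative of the Gaussian can be computed directly (e.g.\ via the Hermite relation $\partial_\xi^n e^{-\xi^2} = (-1)^n H_n(\xi)e^{-\xi^2}$, or by plain differentiation) to obtain
\begin{equation*}
\partial_\xi^5 e^{-\xi^2} = -(32\xi^5 - 160\xi^3 + 120\xi)e^{-\xi^2}.
\end{equation*}
Setting $x=0$ yields $\xi=-u/\sqrt\sigma$; the odd degree of the polynomial cancels the minus sign from $(-\xi)^{2k+1}$, producing the positive expression $(32(u/\sqrt\sigma)^5 - 160(u/\sqrt\sigma)^3 + 120(u/\sqrt\sigma))e^{-u^2/\sigma}$ in the integrand.

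Finally, I would change variables $y = u/\sqrt\sigma$, so $du = \sqrt\sigma\,dy$ and $\psi(u) = \psi(y\sqrt\sigma)$, consolidating the powers of $\sigma$ into $\sigma^{-1/2}\cdot\sigma^{-5/2}\cdot\sigma^{1/2} = \sigma^{-5/2}$. Factoring one power of $y$ out of the polynomial as $32y^5-160y^3+120y = 8y(4y^4 - 20y^2 + 15)$ and rewriting that stray $y$ as $\sigma^{-1/2}(y\sqrt\sigma)$ converts one more $\sigma^{-1/2}$ into the prefactor (giving $\sigma^{-3}$) while supplying the $(y\sqrt\sigma)$ inside the integrand, and matches the claimed formula~\eqref{fF01b}.

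There is no genuine obstacle here: the only care needed is bookkeeping the $(-1)$'s from setting $\xi=-u/\sqrt\sigma$ against the odd monomial in $\xi$, and tracking the powers of $\sigma$ through the two changes of variable. The justification of differentiation under the integral, as noted above, is immediate from the Gaussian factor and the polynomial bound on $\psi$.
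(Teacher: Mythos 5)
Your proposal is correct and follows essentially the same route as the paper: write $z$ via the Gaussian kernel, differentiate five times under the integral (justified by the Gaussian decay against the polynomial bound on $\psi$), evaluate at $x=0$, and rescale $y\mapsto y\sqrt{\sigma}$ to collect the powers of $\sigma$. The only cosmetic difference is that you organize the fifth derivative of the kernel through the Hermite identity $\partial_\xi^5 e^{-\xi^2}=-(32\xi^5-160\xi^3+120\xi)e^{-\xi^2}$, whereas the paper differentiates the kernel directly; the resulting polynomial and $\sigma$-bookkeeping agree with \eqref{fF01b}.
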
 

\begin{proof} 
We have
\begin{equation*} 
z(x)= (\pi \sigma )^{-\frac {1} {2}} \int _\R e^{-\frac { (x-y)^2} {\sigma }} \psi (y) dy,
\end{equation*}  
so that 
\begin{equation} \label{fFo1} 
\partial _x^{n} z(x)= (\pi \sigma )^{-\frac {1} {2}} \int _\R \partial _x^n (e^{-\frac { (x-y)^2} {\sigma }}) \psi (y) dy.
\end{equation} 
We next calculate
\begin{equation*} 
\partial _x^5 (e^{-\frac { (x-y)^2} {\sigma }}) =- e^{-\frac { (x-y)^2} {\sigma }}  \Bigl[ 
 \frac {120 (x-y)} {\sigma ^3} -  \frac {160  (x-y)^3} {\sigma ^4} + \frac {32 (x-y)^5} {\sigma ^5} \Bigr],
\end{equation*} 
so that 
\begin{equation} 
\partial _x^5 (e^{-\frac { (x-y)^2} {\sigma }}) _{ |x=0 } = 8 e^{-\frac { y^2} {\sigma }} \sigma ^{-3} y \Bigl[ 
15 -  \frac {20 y^2} {\sigma } + \frac {4 y^4} {\sigma ^2} \Bigr]. 
\end{equation} 
Thus we deduce from~\eqref{fFo1}  that
\begin{equation} 
 \partial _x^{5} z(0)  = 8 \pi ^{- \frac {1} {2}} \sigma ^{-\frac {7} {2}} \int _\R e^{-\frac { y^2} {\sigma }} \Bigl[ 
15 -  \frac {20 y^2} {\sigma } + \frac {4 y^4} {\sigma ^2} \Bigr] y \psi (y) dy ,
\end{equation} 
from which~\eqref{fF01b} follows.
\end{proof} 

\begin{lem} \label{eCal2} 
If $\psi (x)=  |x|^\alpha x$ with $\alpha >0$, then
\begin{equation} \label{fF1} 
 \partial _x^{5} [ \Ec{\frac {\sigma } {4} } \psi ]  _{ |x=0 }=
- C_\alpha  \sigma ^{-2+ \frac {\alpha } {2}}  ,
\end{equation} 
for all $\sigma >0$, where
\begin{equation} \label{fCalpha} 
C_\alpha = \pi ^{-\frac {1} {2}}  \frac {32\alpha (2- \alpha )} {(\alpha  +3) (\alpha +5)}  \int _\R e^{-y^2 } |y|^{\alpha +6} .
\end{equation} 
\end{lem}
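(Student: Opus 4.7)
The proof is a direct computation, relying entirely on Lemma~\ref{eCal1}. The plan is to substitute $\psi(x)=|x|^\alpha x$ into formula~\eqref{fF01b} and simplify the resulting integral using the $\Gamma$-function.

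First, I would note that with $\psi(x)=|x|^\alpha x$, one has $\psi(y\sqrt\sigma)=\sigma^{(\alpha+1)/2}|y|^\alpha y$, so that
\begin{equation*}
(y\sqrt\sigma)\psi(y\sqrt\sigma)=\sigma^{1+\alpha/2}|y|^{\alpha+2}.
\end{equation*}
Inserting this into~\eqref{fF01b} immediately yields the prefactor $\sigma^{-3}\cdot\sigma^{1+\alpha/2}=\sigma^{-2+\alpha/2}$, which accounts for the claimed power of $\sigma$, and reduces the proof to evaluating
\begin{equation*}
J(\alpha)\Eqdef\int_{\R}e^{-y^2}\bigl[15-20y^2+4y^4\bigr]|y|^{\alpha+2}\,dy.
\end{equation*}

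Second, I would introduce $I_\beta\Eqdef\int_\R e^{-y^2}|y|^\beta\,dy=\Gamma\bigl(\tfrac{\beta+1}{2}\bigr)$ (for $\beta>-1$), so that $J(\alpha)=15\,I_{\alpha+2}-20\,I_{\alpha+4}+4\,I_{\alpha+6}$. The functional equation $\Gamma(x+1)=x\Gamma(x)$ gives $I_{\alpha+4}=\tfrac{\alpha+3}{2}\,I_{\alpha+2}$ and $I_{\alpha+6}=\tfrac{(\alpha+5)(\alpha+3)}{4}\,I_{\alpha+2}$. Collecting everything in terms of $I_{\alpha+2}$ yields
\begin{equation*}
J(\alpha)=\bigl[15-10(\alpha+3)+(\alpha+3)(\alpha+5)\bigr]I_{\alpha+2}=[\alpha^2-2\alpha]\,I_{\alpha+2}=-\alpha(2-\alpha)\,I_{\alpha+2}.
\end{equation*}

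Third, to recast the answer in the form~\eqref{fCalpha} (which involves $I_{\alpha+6}$ rather than $I_{\alpha+2}$), I would use the inverse relation $I_{\alpha+2}=\tfrac{4}{(\alpha+3)(\alpha+5)}\,I_{\alpha+6}$. Combining this with the factor $8\pi^{-1/2}$ from~\eqref{fF01b} gives exactly
\begin{equation*}
8\pi^{-1/2}\,J(\alpha)=-\pi^{-1/2}\,\frac{32\,\alpha(2-\alpha)}{(\alpha+3)(\alpha+5)}\int_\R e^{-y^2}|y|^{\alpha+6}\,dy=-C_\alpha,
\end{equation*}
which, together with the already-identified power $\sigma^{-2+\alpha/2}$, is formula~\eqref{fF1}.

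There is no real obstacle here; the only thing to be a little careful about is the algebraic identity $15-10(\alpha+3)+(\alpha+3)(\alpha+5)=\alpha(\alpha-2)$, which is where the crucial factor $(2-\alpha)$ (and hence the sign restriction $\alpha<2$ used elsewhere in Section~\ref{sLast}) appears. It is noteworthy that this factor vanishes exactly at $\alpha=2$, reflecting the fact that for $\alpha=2$ the function $\psi(x)=|x|^2 x=x^3$ is a polynomial, and $\partial_x^5 e^{(\sigma/4)\Delta}x^3$ vanishes identically.
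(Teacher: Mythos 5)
Your proposal is correct and follows essentially the same route as the paper: substitute $\psi$ into \eqref{fF01b}, pull out the power of $\sigma$, and reduce the bracketed integral via the moment recursion $\int_\R e^{-y^2}|y|^\beta = \tfrac{2}{\beta+1}\int_\R e^{-y^2}|y|^{\beta+2}$ (which the paper obtains by integration by parts and you phrase equivalently through $\Gamma(x+1)=x\Gamma(x)$), arriving at the same identity $15-10(\alpha+3)+(\alpha+3)(\alpha+5)=\alpha(\alpha-2)$. The closing observation that the factor $(2-\alpha)$ vanishes precisely when $\psi$ is the cubic polynomial $x^3$ is a nice consistency check not in the paper, but the argument itself is the same.
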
 

\begin{proof} 
Considering~\eqref{fF01b}, we must calculate
\begin{multline} \label{feCal2u} 
 \int _\R e^{- { y^2}}  [ 15 - 20 y^2 + 4 y^4 ] |y \sqrt \sigma |^{\alpha +2} dy \\ =
\sigma ^{1+ \frac {\alpha } {2}} \int _\R e^{- { y^2}}  [ 15 - 20 y^2 + 4 y^4 ] |y  |^{\alpha +2} dy .
\end{multline} 
Note that, given any $\beta \ge 0$,
\begin{equation*} 
\int _\R e^{-y^2 } |y|^\beta  = \int _\R e^{-y^2 }  \Bigl( \frac { |y|^\beta y} {\beta +1} \Bigr)'
= \frac {2} {\beta +1} \int _\R e^{-y^2 } |y|^{\beta +2} .
\end{equation*} 
It follows that
\begin{equation*} 
\int _\R e^{-y^2 } |y|^{\alpha +2}   =  \frac {2} {\alpha  +3} \int _\R e^{-y^2 } |y|^{\alpha +4}
=  \frac {4} { (\alpha  +3) (\alpha +5) } \int _\R e^{-y^2 } |y|^{\alpha +6},
\end{equation*} 
and
\begin{equation*} 
\int _\R e^{-y^2 } |y|^{\alpha +4}   =  \frac {2} {\alpha  +5} \int _\R e^{-y^2 } |y|^{\alpha +6}.
\end{equation*} 
Therefore,
\begin{equation}  \label{feCal2d} 
 \int _\R e^{- { y^2}}  [ 15 - 20 y^2 + 4 y^4 ] |y  |^{\alpha +2} dy   = \frac {4\alpha (\alpha -2)} {(\alpha  +3) (\alpha +5)}
  \int _\R e^{-y^2 } |y|^{\alpha +6} .
\end{equation} 
The result now follows from~\eqref{fF01b}, \eqref{feCal2u} and~\eqref{feCal2d}.  
\end{proof} 

\begin{proof} [Proof of Theorem~$\ref{eRN2}$]
We write the variable in $\R^N $ in the form $x= (x',  y )$, $x'\in \R^{N-1}$, $y \in \R$.
Accordingly, we write $\DI (x)= \DI ( x', y )$ and $u(t, x)= u (t, x', y )$.
We note that
\begin{equation} \label{fLapl} 
e^{ t \Delta } = e^{ t \Delta  _{ x' }}  e^{ t \partial _y^2 }
\end{equation} 
where $e^{ t \Delta  _{ x' }} $ is the convolution in $\R^{N -1}$ with the kernel $(4\pi t)^{-\frac {N-1} {2}} e^{- \frac { |x' |^2} {4t}}$ and $e^{ t \partial _y^2 } $ is the convolution in $\R $ with the kernel $(4\pi t)^{-\frac {1} {2}} e^{- \frac { y^2} {4t}}$.

Let $\DI \in C^\infty _\Comp (\R^N ) $ and let $u\in C([0, \Tma), \Czu \cap L^1 (\R^N )  )$ be the corresponding maximal solution of~\eqref{NLH}.  
Assume that 
\begin{equation} \label{NH2} 
\DI (x', -y) \equiv -\DI (x', y ) \text{ and } \partial _y \DI( 0 , 0 ) \not =  0  .
\end{equation}
Recall that $u$ is $C^1$ in time and $C^3 $ in space, and that $\sup _{ 0\le t\le  T}  \| u (t) \| _{ W^{2, \infty } } <\infty $ for all $0<T<\Tma$. (See e.g. Theorem~\ref{Holder}.)
Under the assumption~\eqref{NH2} it follows that for all $0\le t\le T$ and $x'\in \R^{N-1} $, $u(t, x', \cdot )$ is odd 
and  $u(t,x', y ) >0$ for $y>0$. 
Moreover, if we set
\begin{gather} 
\eta _0 =  \partial _y \DI( 0 , 0 )  \\
\eta (t, x')= \partial _{y } u(t,x' ,0),\quad 0\le t\le T, x'\in \R^{N-1} 
\end{gather} 
then $\eta \in C([0, \Tma ) \times \R^{N-1} )$.
Therefore, it follows from~\eqref{NH2} that for every $0<\varepsilon < 1 $ there exists $0< \delta _\varepsilon  <\Tma$ such that
\begin{equation} \label{fEta0N} 
\sup _{\substack{  0\le s\le \delta _\varepsilon \\  |x'|\le \delta _\varepsilon }}  |  \eta (s, x')- \eta_0 | \le \varepsilon  |\eta_0|.
\end{equation} 
Let $w$ be defined by
\begin{equation} \label{fEtawuN} 
u(s,x', y )= \eta (s, x') y  + w(s,x', y ) .
\end{equation} 
We claim that
\begin{equation}  \label{fEtawdN} 
 |w(s,x', y )| \le C  y  ^2, 
\end{equation} 
for all $s\in [0,T]$, $x'\in \R^{N-1}$ and $y\in \R$, where $C= \frac {1} {2} \sup _{ 0\le t\le  T}  \| u (t) \| _{ W^{2, \infty } }$. 
Indeed, fix $0\le s\le T$ and $x'\in \R^{N-1}$, and set $h(y)= w(s,x',y)=u(s, x', y)- \eta (s, x') y$.
We have  $h'(y)= \partial _y u ( s,x', y) -  \eta (s, x')$ and $h'' (y)= \partial ^2 _y u (s, x' ,y)$. In particular, $h(0)= h'(0)= 0$ so that
\begin{equation*} 
h (y) = \int _0 ^y \int _0^\tau  \partial ^2 _y u (s, x' ,\sigma )\, d\sigma \, d\tau ,
\end{equation*} 
and so
\begin{equation*} 
 | w( s, x', y )|=|h(y)| \le \frac {1} {2} y^2  \| u(s, \cdot ,\cdot ) \| _{ W^{2, \infty } } ,
\end{equation*} 
which proves~\eqref{fEtawdN}. 
Since $u$ is bounded, we deduce easily from~\eqref{fEtawuN}-\eqref{fEtawdN}   that 
\begin{equation} \label{fF3N} 
[ |u|^\alpha u ] (s,x', y  )=  |\eta (s, x') y|^{\alpha } \eta (s, x') y  +  \widetilde{w} (s,x', y ), 
\end{equation} 
with 
\begin{equation} \label{fF4N} 
 | \widetilde{w} (s,x', y )| \le C  |y |^{\alpha +2},
\end{equation} 
for all $s \in [0,T]$ and $(x', y ) \in \R^N $. 
It follows from Lemma~\ref{eCal2} that if
 $\psi (y )=  |y |^\alpha y $, then 
\begin{equation} \label{fF1N} 
 \partial _{y }^{5} [ e^{  {\frac {\sigma } {4} } \partial _y^2  }\psi ]  _{ |y=0 }= 
-   C_\alpha  \sigma ^{-2+ \frac {\alpha } {2}},
\end{equation} 
for all $\sigma >0$, where $C_\alpha > 0$ is given by~\eqref{fCalpha}. 
We deduce from~\eqref{fLapl} and~\eqref{fF1N} that 
\begin{equation} \label{fTE1} 
\begin{split} 
\partial _y^{5} [ e^{ {(t-s) } \Delta } & (   | \eta (s, x')y |^\alpha \eta (s, x') y ) ] _{ |y=0 } 
\\ & = -  C_\alpha (4(t-s) )^{-2+ \frac {\alpha } {2}} e^{ (t-s) \Delta  _{ x' }} [ | \eta (s, x') |^\alpha  \eta (s, x')] .
\end{split} 
\end{equation} 
On the other hand, it follows from~~\eqref{fEta0N} that there exists $C$ independent of $0<\varepsilon <1$ such that
\begin{equation*} 
\sup _{\substack{  0\le s\le \delta _\varepsilon \\  |x'|\le \delta _\varepsilon }}  |  | \eta (s, x')|^\alpha  \eta (s, x')-  |\eta_0|^\alpha \eta_0 | \le  \varepsilon C .
\end{equation*} 
By possibly choosing $\delta _\varepsilon >0$ smaller, we deduce that 
\begin{equation} \label{fTE2} 
\sup _{\substack{  0\le s < t\le \delta _\varepsilon \\  |x'|\le \delta _\varepsilon }}  |  e^{ (t-s) \Delta  _{ x' }} [  | \eta (s, x')|^\alpha  \eta (s, x') ] -  |\eta_0|^\alpha \eta_0 | \le  \varepsilon C .
\end{equation} 
It follows from~\eqref{fTE1} and~\eqref{fTE2} that 
\begin{equation} \label{fF2N} 
\begin{split} 
 |\partial _y^{5} [ e^{ {(t-s) } \Delta }  |\eta (s, x') y |^\alpha \eta (s, x') y )]  _{ |y=0 }
 +  C_\alpha (4(t-s) )^{-2+ \frac {\alpha } {2}}  & |\eta_0|^\alpha \eta_0 | \\ &
\le \varepsilon C (t-s)^{-2 + \frac {\alpha } {2}}
\end{split} 
\end{equation} 
for all $0\le  s<t \le \delta _\varepsilon $ and $  |x'|\le \delta _\varepsilon $.
On the other hand, it follows from~\eqref{fF01b}  that
\begin{multline*} 
 |\partial _y^{5} [ e^{ {(t-s) } \Delta } \widetilde{w} (s, x', y)]  _{ |y=0 } | = 
 |e^{ {(t-s) } \Delta  _{ x' }}
 \partial _y^{5} [ e^{ {(t-s) } \partial _y^2} \widetilde{w} (s, x', y)]  _{ |y=0 } | \\ \le C (t-s) ^{-\frac {5} {2}}
 e^{ {(t-s) } \Delta  _{ x' }}  \Bigl(  \int _\R e^{- { y^2}}   
 |15 - 20 y^2 + 4 y^4 | \,  |y|  |  \widetilde{w}( s, x' , y \sqrt{4(t-s)})   | dy \Bigr) \\
  \le C (t-s) ^{-\frac {5} {2}}  e^{ {(t-s) } \Delta  _{ x' }}  \Bigl(  \int _\R e^{- \frac {y^2} {2}}   
  |  \widetilde{w}( s, x', y \sqrt{4(t-s)})   | dy \Bigr).
\end{multline*} 
Applying~\eqref{fF4N}, we deduce that
\begin{equation} \label{fF5N} 
 |\partial _y^{5} [ e^{ {(t-s) } \Delta } \widetilde{w} (s, x', y)]  _{ |y=0 } | 
\le C (t-s) ^{-\frac {3} {2} + \frac {\alpha } {2}} 
  \int _\R e^{- \frac {y^2} {2}}    |y|^{\alpha +2} \le C  (t-s) ^{-\frac {3} {2} + \frac {\alpha } {2}} .
\end{equation} 
It now follows from~\eqref{fF3N}, \eqref{fF2N} and~\eqref{fF5N} that  
\begin{equation} \label{fF2Nb1} 
\begin{split} 
 |\partial _y^{5} [ e^{ {(t-s) } \Delta } & |u (s, x', y )|^\alpha u  (s, x', y ) ]  _{ |y=0 }
\\ & +  C_\alpha (4(t-s) )^{-2+ \frac {\alpha } {2}}   |\eta_0|^\alpha \eta_0 | 
\le \varepsilon C (t-s)^{-2 + \frac {\alpha } {2}}
\end{split} 
\end{equation} 
for all $0\le  s<t \le \delta _\varepsilon $ and $  |x'|\le \delta _\varepsilon $.

The  point is that $ (t-s) ^{-2+ \frac {\alpha } {2}}  $ is  not integrable in $s$ at $s=t$ under the assumption $0< \alpha <2$. 
We now conclude the proof as follows.
Fix $0< \tau  < \Tma $ and, given any $0\le t< \tau $, set
\begin{equation} \label{fF7N} 
\NH  (t,  \tau , x', y )= \int _0 ^{t } \Ec{( \tau   -s)}  |u (s)|^\alpha u (s) \, ds.
\end{equation} 
Since $\tau -s\ge \tau -t>0$ for $s\in [0,t]$,   the smoothing effect of the heat semigroup implies that the integrand in~\eqref{fF7N} is integrable as a function with values in $H^{m } (\R^N  ) $ for all $m\ge 0$.  Choosing $m$ large enough so  that $H^{m } (\R^N  ) \subset C^5 (\R^N )$, we see that 
the formula
\begin{equation} \label{fF8N} 
\partial _y^5 \NH  (t,  \tau ,x', 0  )= \int _0^{t} \partial _y^5 [ \Ec{( \tau   -s)}  |u (s)|^\alpha u (s) ] _{ |y=0 } \, ds
\end{equation} 
makes sense. 
Applying~\eqref{fF2Nb1} with $t$ replaced by $\tau $, we deduce that
\begin{equation} \label{fF9N} 
\begin{split} 
 | \partial _y^5 \NH  (t,  \tau , x', 0 ) | & \ge [ C_\alpha 4^{-2+ \frac {\alpha } {2}}   |\eta_0|^{ \alpha +1}  -\varepsilon C ] \int _0^t (\tau -s) ^{-2+ \frac {\alpha } {2}} \,ds
 \\ & = \frac {2} {2-\alpha }  [ C_\alpha 4^{-2+ \frac {\alpha } {2}}   |\eta_0|^{ \alpha +1}  -\varepsilon C ]  [ (\tau -t)^{-\frac {2-\alpha } {2}} - \tau ^{-\frac {2-\alpha } {2}}  ] 
\end{split} 
\end{equation} 
for $0<t <\tau  \le \delta _\varepsilon $ and $  |x'|\le \delta _\varepsilon $.
We now fix $s, p$ such that $s\ge 5+ \frac {1} {p}$ and $\varepsilon $ sufficiently small so that $C_\alpha 4^{-2+ \frac {\alpha } {2}}   |\eta_0|^{ \alpha +1}  > \varepsilon C$, and we deduce from~\eqref{fF9N}  and the embedding
 $H^{s,p} (\R ) \hookrightarrow W^{5, \infty }( \R )$  that
\begin{equation} \label{fF10N} 
 \| \NH  (t,   \tau , x' , \cdot ) \| _{ H^{s,p} (\R)} \ge a ( \tau  - t)^{-1+ \frac {\alpha } {2}}
- a  t   ^{-1+ \frac {\alpha } {2}} - A,
\end{equation} 
for some constants $a,A>0$ independent of $0<t <\tau  \le \delta _\varepsilon $ and $  |x'|\le \delta _\varepsilon $.
We now use the property
\begin{equation*} 
 \| u \| _{ L^p _{ x' } ( \{  |x'|<\delta _\varepsilon  \} , H^{s, p}( \R_y) ) } \le  \| u \| _{ L^p _{ x' } (\R^{N-1}, H^{s, p}( \R_y) ) } \le C  \| u \| _{ H^{s , p} (\R^N  _{ x', y }) },
\end{equation*} 
 (see~\eqref{fSob1}), and we deduce from~\eqref{fF10N}  that for some constant $\nu >0$
\begin{equation}  \label{fF10N2b1} 
\nu \|   \NH  (t,   \tau ) \| _{ H^{s,p} (\R^N)} \ge a ( \tau  - t)^{-1+ \frac {\alpha } {2}}
- a  t   ^{-1+ \frac {\alpha } {2}} - A  
\end{equation} 
so that
\begin{equation} \label{fF10N2} 
  \|   \NH  (t,   \tau ) \| _{ H^{s,p} (\R^N)}    \goto  _{  \tau  \downarrow t } \infty.
\end{equation} 
Observe that
\begin{equation*} 
\begin{split} 
 \lambda \NH (t,  \tau , x', y ) & =  \lambda \Ec{(  \tau  -t ) } \int _0^{t  } \Ec{(t -s)}  |u (s)|^\alpha u (s) \, ds \\ &=   \Ec{(  \tau  -t ) } [u(t ) - \Ec{t } \DI]
\end{split} 
\end{equation*} 
hence
\begin{equation*} 
 |\lambda |\,  \| \NH (t,  \tau  ) \| _{ H^{s, p} }\le   \| u(t) \| _{ H^{s,p} }+  \| \DI \| _{ H^{s,p} }.
\end{equation*} 
Note that $\| \DI \| _{ H^{s,p} } <\infty $. Therefore,
letting $\tau \downarrow t$ and 
applying~\eqref{fF10N2}, we conclude that $\| u(t) \| _{ H^{s,p} } =\infty $.
\end{proof} 

\begin{rem} \label{eRemSDb} 
Observe that if $\DI $ is as in the  proof of Theorem~\ref{eRN2},
then so is $\varepsilon \DI$ for all $\varepsilon \not = 0$.
\end{rem} 

\begin{rem} 
A careful analysis of the proof of Theorem~\ref{eRN2} shows that the property $C_\alpha >0$
(i.e., $\alpha <2$), where $C_\alpha $ is given by~\eqref{fCalpha},  could in principle be replaced by the condition $C_\alpha \not = 0$ (i.e.,  $\alpha \not = 2$). 
On the other hand, the condition $\alpha <2$ is crucial in proving~\eqref{fF10N2} by letting $\tau \downarrow t$ in~\eqref{fF10N2b1}.

\end{rem} 

\begin{proof} [Proof of Theorem~$\ref{eIP1}$]
We argue by contradiction. Suppose  that  for some $s\ge 0$ the Cauchy problem~\eqref{NLH} is locally well posed for small data in $H^s (\R^N ) $. 
Using Theorem~\ref{eRN2} and Remark~\ref{eRemSDb} with $p=2$ we see that $2s \le 11$. It follows then from~\eqref{fCOR1} that 
\begin{equation}  \label{LWPu} 
N-2s > \frac {4} {\alpha } . 
\end{equation} 
A scaling argument allows us now to conclude. Indeed, 
let $\varphi \in C^\infty _\Comp (\R^N )$, $\varphi \not = 0$. Since $\lambda >0$, for $k>0$ sufficiently large the (classical) solution $u$ of~\eqref{NLH} with initial value $\DI= k \varphi $ blows up in finite time, say at $\Tma$.
Given $\mu  >0$, let
\begin{equation}  \label{LWPd} 
u_\mu  (t,x) = \mu  ^{\frac {2} {\alpha }} u(\mu  ^2t, \mu  x).
\end{equation} 
It follows that $u_\mu  $ is a solution of~\eqref{NLH} with the initial value
\begin{equation}  \label{LWPt} 
\DI^\mu  (x)= \mu  ^{\frac {2} {\alpha }} \DI (\mu  x),
\end{equation}  
which blows up at 
\begin{equation}  \label{LWPs} 
\Tma ^\mu  = \frac {\Tma } {\mu  ^2} \goto  _{ \mu  \to \infty  }0.
\end{equation} 
On the other hand, 
\begin{equation}  \label{LWPq} 
 \| \DI ^\mu  \| _{ H^s } \le \mu  ^{\frac {2} {\alpha } + s - \frac {N} {2}}  \| \DI \| _{ H^s },
\end{equation} 
for $\mu\ge 1$. Using~\eqref{LWPu}, we see that
\begin{equation} \label{LWPc} 
 \| \DI^\mu  \| _{ H^s } \goto _{ \mu  \to \infty  }0.
\end{equation}  
Comparing~\eqref{LWPc} and~\eqref{LWPs}, we conclude that~\eqref{NLH} cannot be locally well posed for small data in $H^s (\R^N ) $. 
\end{proof}

\appendix

\section{H\"older regularity for the heat equation} \label{sH} 

In this section, we state a classical regularity result for the heat equation~\eqref{NLH}.
For completeness, we give the proof, which is based on classical arguments.

\begin{thm} \label{Holder} 
Let $\alpha >0$, $\lambda \in \C$, $\DI \in \Cz $ and let $u\in C([0, \Tma ), \Cz )$ be the corresponding maximal solution of~\eqref{NLH}. 
Fix $0<  \widetilde{\alpha}  <1$ with $ \widetilde{\alpha } \le \alpha  $,  $0 < T < \Tma$, and assume further that $\Delta \DI\in \Cz$, 
 $\DI \in C^3 (\R^N )$ and
\begin{equation} \label{fRE7b1} 
\sup _{\substack{  |\gamma |\le 3 \\ x\in \R^N }} | \partial ^\gamma \DI (x)|
  + \sup 
 _{  |\gamma |=3}  | \partial ^\gamma \DI | _{ \widetilde{\alpha }  }  < \infty 
\end{equation} 
with the notation~\eqref{fHO1}. 
It follows that $\Delta u\in C([0,\Tma), \Cz)$, that  $\partial _t u$, $\nabla \partial _t u$, and all space-derivatives of $u$  of order $\le 3$ belong to $C([0,T] \times \R^N )$, and that
\begin{equation} \label{fRE2b2} 
\begin{split} 
\sup  _{ \substack{ 2\ell +  |\gamma |\le 3 \\ 0\le t\le T} }   \| \partial _t^\ell \partial _x^\gamma u (t) \| _{L^\infty }    & + 
\sup _{\substack{ x\in \R^N \\ 0\le t<s\le T \\ 2\le 2\ell +  |\gamma |\le 3}}
 \frac { | \partial _t^\ell \partial _x^\gamma u (t,x) - \partial _t^\ell \partial _x^\beta u (s,x)|} { |t-s|^{\frac {  \widetilde{\alpha}   +3-2 \ell - |\gamma |} {2}}} \\ &
 + \sup 
 _{\substack{  0\le t\le T\\ 2\ell +  |\gamma |=3}}
 |  \partial _t^\ell \partial _x^\gamma u  (t ) |_ {\widetilde{\alpha} }  < \infty .
\end{split} 
\end{equation} 
In particular
\begin{equation} \label{fRE2b3} 
\sup 
 _{\substack{ 0\le t\le T\\    |\gamma |=3}}
 |   \partial _x^\gamma u  (t ) |_{ \widetilde{\alpha} }   < \infty .
\end{equation} 
In the above estimates, $\partial _x^\gamma = \partial  _{ x_1 }^{\gamma _1} \cdots  \partial  _{ x_N }^{\gamma _N}$ if $\gamma  = (\gamma  _1, \cdots , \gamma  _N )$. 
\end{thm}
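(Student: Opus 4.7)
\emph{Plan.} The strategy is to bootstrap Duhamel's formula
\[
u(t) = \Ec{t}\DI + \lambda \int _0^t \Ec{t-s} f(u(s))\,ds, \qquad f(u) = |u|^\alpha u,
\]
using classical heat-kernel estimates. The governing constraint is that $f$ has only $C^{1,\widetilde{\alpha}}$ pointwise regularity (valid since $\widetilde{\alpha}\le\min(\alpha,1)$), so at most one spatial derivative may be placed on $f(u)$ at any stage; each additional derivative of $u$ must be produced by the heat semigroup, and the resulting $(t-s)^{-1}$ time singularity must be tamed by the cancellation identities $\int_{\R^N}\partial^k K_\tau(x-y)\,dy=0$ for $k\ge 1$, where $K_\tau$ denotes the heat kernel.

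To bound $\nabla u$ in $L^\infty$ on $[0,T]\times\R^N$, differentiate Duhamel once and place the derivative on the kernel: the bound $\|\nabla\Ec{\tau}\|_{L^\infty\to L^\infty}\le C\tau^{-1/2}$ is integrable. For $\nabla^2 u$, place one derivative on the kernel and one on $f(u)$, using that $\nabla f(u)=f'(u)\nabla u$ is bounded from the previous step. For $\nabla^3 u$ we cannot iterate further (since $f''$ does not exist when $\alpha<1$), so we write
\[
\nabla^3 u(t)=\Ec{t}\nabla^3\DI+\lambda\int_0^t\nabla^2\Ec{t-s}\bigl(f'(u)\nabla u\bigr)(s)\,ds.
\]
The integrand $h(s):=f'(u(s))\nabla u(s)$ is bounded and spatially $\widetilde{\alpha}$-H\"older, since $f'(z)\sim|z|^\alpha$ is $\widetilde{\alpha}$-H\"older on bounded sets and $u$, $\nabla u$ are Lipschitz in $x$. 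Using the cancellation identity one obtains
\[
|\nabla^2\Ec{\tau}h(x)|\le C|h|_{\widetilde{\alpha}}\,\tau^{-1+\widetilde{\alpha}/2},
\]
which is integrable at $\tau=0$ since $\widetilde{\alpha}>0$; this yields the $L^\infty$ bound on $\nabla^3 u$.

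To upgrade to $\sup_{[0,T]}|\nabla^3 u(t)|_{\widetilde{\alpha}}<\infty$, compare the Duhamel expression at two points $x,y\in\R^N$, set $d=|x-y|$, and split $\int_0^t(\cdots)\,ds$ into $(0,t-d^2)\cup(t-d^2,t)$. On the short piece, bound the difference by twice the pointwise $L^\infty$ estimate above and integrate $\tau^{-1+\widetilde{\alpha}/2}$ over $(0,d^2)$, producing $Cd^{\widetilde{\alpha}}$. On the long piece, use the Lipschitz-in-$x$ bound $\|\nabla^3\Ec{\tau}h\|_{L^\infty}\le C|h|_{\widetilde{\alpha}}\,\tau^{-3/2+\widetilde{\alpha}/2}$ (again from the cancellation identity) multiplied by $d$; although the time-primitive diverges at $0$, we integrate from $d^2$, and evaluating it at $\tau=d^2$ yields $d\cdot d^{-1+\widetilde{\alpha}}=d^{\widetilde{\alpha}}$. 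Both pieces sum to $Cd^{\widetilde{\alpha}}$, proving~\eqref{fRE2b3}. The remaining estimates in~\eqref{fRE2b2} follow: the equation $u_t=\Delta u+\lambda f(u)$ and its spatial derivative $\nabla u_t=\nabla\Delta u+\lambda f'(u)\nabla u$ give the $L^\infty$ bounds on $u_t$ and $\nabla u_t$ directly from the above, and the parabolic time-H\"older exponents are obtained by comparing $\nabla^j u(t)$ and $\nabla^j u(s)$ via Duhamel, the identity $\Ec{t}-\Ec{s}=\int_s^t\Delta\Ec{\sigma}\,d\sigma$, and a short/long split in time parallel to the spatial one.

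\emph{Main obstacle.} The central difficulty is the low pointwise regularity of $f$: when $\alpha<1$, $f''$ does not exist, forbidding a naive chain-rule bootstrap of classical derivatives. All higher-order smoothness of $u$ must instead be extracted from the heat semigroup through the cancellation identities, and the precise $\widetilde{\alpha}$-dependence of the integrability of $\tau^{-1+\widetilde{\alpha}/2}$ at $\tau=0$ is exactly what forces the conclusion to stop at $\widetilde{\alpha}\le\alpha$. Uniformity of every estimate down to $t=0$ is guaranteed by the matching regularity assumed on $\DI$ in~\eqref{fRE7b1}.
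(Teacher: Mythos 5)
Your proposal is correct in outline, but it proves the theorem by a genuinely different route from the paper. You build the parabolic Schauder estimates by hand from Duhamel's formula, gaining one spatial derivative per pass by moving a derivative onto the heat kernel and exploiting the cancellation $\int \partial^k K_\tau = 0$ together with the $\widetilde{\alpha}$-H\"older continuity of $\nabla f(u)$; the H\"older seminorm of $\nabla^3 u$ then comes from the classical short/long time splitting at $\tau = |x-y|^2$. The paper instead gets the initial regularity ($u \in C^1([0,T],\Cz)\cap C([0,T],D(A))$, hence $u,\nabla u$ bounded and Lipschitz in time) from abstract semigroup theory (Pazy, after verifying that the Laplacian on $\Cz$ is $m$-dissipative via Bessel potentials), and then outsources exactly the kernel estimates you prove by hand to the linear Schauder theory of Ladyzhenskaya--Solonnikov--Ural'ceva, applied twice in a bootstrap (first with exponents $\widetilde{\alpha}$, $\widetilde{\alpha}+2$, then, after upgrading the H\"older regularity of $f$ and $\nabla f$ via the elementary inequality~\eqref{fEE1}, with $\widetilde{\alpha}+1$, $\widetilde{\alpha}+3$). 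Your version is more self-contained and makes transparent where the exponent $\widetilde{\alpha}$ is consumed; the paper's is shorter and gets the uniform-down-to-$t=0$ time regularity for free from the $D(A)$ theory. Two small points to tighten: (i) at each stage you tacitly assume the derivatives you are estimating exist in order to differentiate under the integral and integrate by parts; a rigorous version needs difference quotients or an approximation argument (routine, but worth a sentence); (ii) the closing remark attributing the restriction $\widetilde{\alpha}\le\alpha$ to the integrability of $\tau^{-1+\widetilde{\alpha}/2}$ is slightly off --- that power is integrable for every $\widetilde{\alpha}>0$; the cap comes, as you correctly say earlier, from $f'$ being only $\min(\alpha,1)$-H\"older, which limits the H\"older seminorm of $h=\nabla f(u)$ and hence the power of $|x-y|$ extractable from the kernel.
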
 

\begin{proof} 
We define the Laplacian $A$ on $\Cz$ by 
\begin{equation} 
\begin{cases} 
D( A) =\{ u\in \Cz;\, \Delta u\in \Cz \} \\
Au = \Delta u \quad u\in D(A).
\end{cases} 
\end{equation} 
We equip $D(A) $ with the graph norm $ \| u \| _{ D(A) } =  \| u \| _{ L^\infty  } +  \| \Delta u \| _{ L^\infty  }$. 
It follows that $A$ is $m$-dissipative with dense domain and that $D(A) \hookrightarrow C^1_0 (\R^N )$. More precisely
\begin{equation} \label{fRE1} 
 \| \nabla u \| _{ L^\infty  } \le C  \| \Delta u \| _{ L^\infty  }^{\frac {1} {2}}  \| u \| _{ L^\infty  }^{\frac {1} {2}}
\end{equation} 
for all $u\in D(A)$. 
Indeed, consider the Bessel potential $G_\sigma $, $\sigma >0$. (See Aronszajn and Smith~\cite{AronszajnS}.) If $f\in \Cz$, then $u= G_2 \star f$ satisfies $-\Delta u+u=f$, $u\in \Cz$ and $\Delta u\in \Cz$. Moreover, since $  \| G_\sigma  \| _{ L^1 }=1 $ (see~\cite[formula~(4.6$'$), p.~417]{AronszajnS}) we see that if $u\in D(A)$, then $  \| u \| _{ L^\infty  } \le  \| -\Delta u + u \| _{ L^\infty  }$. By an obvious scaling argument, we see that $ \rho   \| u \| _{ L^\infty  } \le  \| -\Delta u + \rho  u \| _{ L^\infty  }$ for all $\rho  >0$. Thus  $A$ is $m$-dissipative. Furthermore, $C^\infty _\Comp (\R^N )\subset D(A)$ so that $D(A)$ is dense. 
To prove~\eqref{fRE1}, consider $u\in D(A)$. We have $u= G_2 \star (-\Delta u+u)$. Since $\nabla G_2 \in L^1 (\R^N ) $ (see~\cite[formula~(4.5), p.~417]{AronszajnS}), we deduce that $ \| \nabla u\| _{ L^\infty  }\le C ( \| \Delta u \| _{ L^\infty  }+  \| u \| _{ L^\infty  })$, from which~\eqref{fRE1} follows by scaling.
 
Let now $\DI \in \Cz $ and let $u\in C([0, \Tma ), \Cz )$ be the corresponding maximal solution of~\eqref{NLH}. It follows from the above observations and from Pazy~\cite[Theorem~1.6, p.~187]{Pazy} that $u \in C^1 ([0,T], \Cz ) \cap C ([0,T], D(A))$.  
In particular, using~\eqref{fRE1}, 
\begin{equation} \label{fRE4} 
\sup  _{ 0\le t\le T } (  \| u(t)  \| _{ L^\infty  }+  \| \nabla u(t)  \| _{ L^\infty  })  + \sup  _{ 0\le t<s \le T } \frac { \| u(t) - u(s) \| _{ L^\infty  }} { |t-s|} <\infty .
\end{equation} 
Setting
\begin{equation} 
f= \lambda  |u|^\alpha u
\end{equation} 
and using the formula
\begin{equation}  \label{fRE3} 
\nabla f = \lambda \frac {\alpha +2} {2}  | u |^\alpha \nabla u +  \lambda \frac {\alpha } {2}  | u |^{\alpha -2} u ^2  \nabla \overline{u} 
\end{equation} 
we deduce from~\eqref{fRE4} that
\begin{equation}  \label{fRE6} 
\begin{split} 
\sup  _{ 0\le t\le T } & ( \| f (t) \| _{L^\infty }+  \| \nabla f(t) \| _{ L^\infty  } )  +
\sup _{ 0\le t<s\le T}
 \frac {  \| f ( t ) - f ( s ) \| _{ L^\infty  } } { |t-s|}
 \\& + \sup 
 _{ 0\le t\le T}
 | f (t )|_1 < \infty .
\end{split} 
\end{equation} 
In particular,
\begin{equation} \label{fRE2b1} 
\sup  _{ 0\le t\le T }   \| f (t) \| _{L^\infty }   + 
\sup _{ 0\le t<s\le T}
 \frac {  \| f ( t ) - f ( s ) \| _{ L^\infty  } } { |t-s|^{\frac {  \widetilde{\alpha}   } {2}}}
 + \sup 
 _{ 0\le t\le T}  | f (t ) |_{ \widetilde{\alpha } }    < \infty .
\end{equation} 
We apply Ladyzhenskaya {\it et al.}~\cite{LSU}, Chapter~IV, Section~2, p.~273, estimate~(2.1) 
to the equations satisfied by the real and the imaginary parts of $u$, 
with $l =  \widetilde{\alpha}  $ and estimate~(2.2) with $l=  \widetilde{\alpha}  +2$. It follows (among other properties) that
\begin{equation}  \label{fRE7} 
\sup _{  0\le t<s\le T }
 \frac {   \| \nabla u(t ) -\nabla u(s ) \| _{ L^\infty  } } { |t-s|^{\frac { \widetilde{\alpha}  +1} {2}}}
  + \sup 
 _{ 0\le t\le T}  |\nabla u (t ) |_1 < \infty .
\end{equation} 
We next recall the elementary estimate 
\begin{equation} \label{fEE1} 
\begin{split} 
 |\,  |z_1|^\alpha -  |z_2|^\alpha | &+  |\,  |z_1|^{\alpha -2} z_1^2 -  |z_2|^{\alpha -2} z_2^2 | 
 \\ & \le 
 \begin{cases} 
C  |z_1 - z_2|^\alpha & 0<\alpha \le 1 \\ 
C (  |z_1|^{\alpha -1} +  |z_2|^{\alpha -1} )  |z_1 - z_2| & \alpha \ge 1
 \end{cases} 
\end{split} 
\end{equation} 
(See e.g.~\cite{CFH}, formulas~(2.26) and~(2.27).)
Estimate~\eqref{fRE6}, formula~\eqref{fRE3}, estimates~\eqref{fEE1}, \eqref{fRE4} and~\eqref{fRE7}  imply that 
\begin{equation} \label{fRE2} 
\begin{split} 
\sup  _{ 0\le t\le T } & ( \| f (t) \| _{L^\infty }+  \| \nabla f(t) \| _{ L^\infty  } )   + 
\sup _{  0\le t<s\le T }
 \frac {  \|f (t ) -f(s ) \| _{ L^\infty  } } { |t-s|^{\frac {1+  \widetilde{\alpha}  } {2}}}\\ &
 + \sup _{  0\le t<s\le T }
 \frac {   \| \nabla f(t ) -\nabla f(s ) \| _{ L^\infty  } } { |t-s|^{\frac { \widetilde{\alpha}  } {2}}}
  + \sup 
 _{ 0\le t\le T}   |\nabla f (t ) |_{ \widetilde{\alpha}}   < \infty .
\end{split} 
\end{equation} 
This allows us to apply again 
Ladyzhenskaya {\it et al.}~\cite{LSU}, Chapter~IV, Section~2, p.~273, but this time we let $l=  \widetilde{\alpha}  +1$ in~(2.1) and $l=  \widetilde{\alpha}  +3$ in~(2.2). 
The conclusion follows.
\end{proof}

\end{document}